\let\Re=\undefined
\DeclareMathOperator{\Re}{Re}
\DeclareMathOperator{\Id}{Id}
\newcommand{\D}{{\mathbb D}}
\newcommand{\Z}{{\mathbb Z}}
\newcommand{\C}{{\mathbb C}}
\newcommand{\R}{{\mathbb R}}
\newcommand{\E}{{\mathbb E}}
\newcommand{\calA}{{\mathcal{A}}}
\newcommand{\one}{{\chi}}
\newcommand{\vi}{{\vec\imath}}
\newcommand{\vj}{{\vec\jmath}}
\newcommand{\Grr}{{G_{\!R}^r}}
\newcommand{\qtq}[1]{\quad\text{#1}\quad}
\theoremstyle{plain}
\newtheorem{theorem}{Theorem}
\newtheorem{lemma}[theorem]{Lemma}
\newtheorem{proposition}[theorem]{Proposition}
\newtheorem{corollary}[theorem]{Corollary}
\theoremstyle{definition}
\newtheorem{definition}[theorem]{Definition}
\theoremstyle{remark}
\newtheorem{remark}[theorem]{Remark}
\numberwithin{equation}{section}
\numberwithin{theorem}{section}
\newcounter{smalllist}
\newenvironment{SL}{\begin{list}{{\ \rm(\roman{smalllist})\ }}{%
\setlength{\topsep}{0mm}\setlength{\parsep}{0mm}\setlength{\itemsep}{0mm}%
\setlength{\labelwidth}{1.3em}\setlength{\itemindent}{2em}\setlength{\leftmargin}{0em}\usecounter{smalllist}%
}}{\end{list}}
\begin{document}

\title[Autocorrelation of the characteristic polynomial]{Autocorrelations of the characteristic polynomial of a random matrix\\under microscopic scaling}

\author{Rowan Killip}
\address{University of California, Los Angeles}
\author{Eric Ryckman}
\address{California Institute of Technology}

\maketitle

%%%%%%%%%%%%%%%%%%%%%%%%%%%%%%%%%%%%%%%%%%%%%%%%%%%
%
%
%                                   Section
%
%
%%%%%%%%%%%%%%%%%%%%%%%%%%%%%%%%%%%%%%%%%%%%%%%%%%%

\section{Introduction}

The goal of this paper is to calculate the autocorrelation function for the characteristic polynomial of a
random matrix in the microscopic regime.  As will be explained, results fitting this description have be proved before;
however, here we will cover all values of inverse temperature $\beta\in(0,\infty)$.  The method to be employed
also differs from prior work.

Let us begin by introducing the models to be discussed.  The probability law for the eigenvalues of a matrix chosen
at random from the $n\times n$ unitary group according to Haar measure is given by the Weyl integration formula.
It reads
\begin{align}\label{E:Weyl I F}
\E(f) &=  \tfrac1{n!} \int_{\!-\pi}^\pi\!\! \cdots \! \int_{\!-\pi}^\pi
    f(e^{i\theta_1},\ldots,e^{i\theta_n})
    \bigl|\Delta(e^{i\theta_1},\ldots,e^{i\theta_n})\bigr|^2 \, \tfrac{d\theta_1}{2\pi} \cdots \tfrac{d\theta_n}{2\pi}
\end{align}
for any (symmetric) function $f$ of the $n$ eigenvalues.  Here, $\Delta$ denotes the Vandermonde determinant:
\begin{equation}\label{VDefn}
  \Delta(z_1,\ldots,z_n) = \prod_{1\leq j < k \leq n} \!\! (z_k - z_j).
\end{equation}

The characteristic feature of the eigenvalues of random matrices is their repulsion, expressed in \eqref{E:Weyl I F} by
the second power of $|\Delta|$.  This same power occurs in the setting of random Hermitian matrices; however, for
random real-symmetric or quaternion-self-dual matrices, the power is one or four, respectively.  An analogous
trichotomy occurs in the unitary setting, albeit for certain symmetric spaces, rather than the classical compact Lie
groups $SO(n)$ and $Sp(n)$.  This was discovered by Dyson, \cite{Dyson}, who further advocated studying these three
special cases as a part of the continuum of possible powers of the Vandermonde factor.  This leads to the following
family of probability laws:
\begin{align}  \label{CGbeta}
\E_n^\beta (f) &=  \tfrac{[\Gamma(\frac12\beta + 1)]^n}{\Gamma(\frac12\beta n + 1)} \int_{\!-\pi}^\pi\!\! \cdots \! \int_{\!-\pi}^\pi
    f(e^{i\theta_1},\ldots,e^{i\theta_n})
    \bigl|\Delta(e^{i\theta_1},\ldots,e^{i\theta_n})\bigr|^\beta \, \tfrac{d\theta_1}{2\pi} \cdots \tfrac{d\theta_n}{2\pi}
\end{align}
with $\beta\in[0,\infty)$ and $n$, a non-negative integer.  The parameter $\beta$ is known as the inverse temperature,
consistent with the interpretation of \eqref{CGbeta} as the Gibbs measure for a gas of particles confined to a circle
with logarithmic repulsion (the planar Coulomb law).  For the normalization constant in \eqref{CGbeta}, see
\cite{Good,Wilson}.

Notice that when $\beta=0$, the points $e^{i\theta_j}$ are statistically independent with a uniform distribution on the
circle; this is the infinite-temperature limit.  The $\beta\uparrow \infty$ limit also exists and gives a random
rotation of equi-spaced points on the circle (these configurations give the maximal value for the Vandermonde factor).
In this zero-temperature limit, our gas has frozen into a perfect crystal.

We will study statistics of the `characteristic polynomial'
\begin{align}\label{charpoly}
Z_n(z) := \prod_{j=1}^n \bigl(1 - z^{-1} e^{i\theta_j}\bigr).
\end{align}
When $e^{i\theta_j}$ are interpreted as the eigenvalues of a matrix, this is indeed the characteristic polynomial,
except that a factor $z^n$ is missing.  This (re)normalization of the characteristic polynomial is rather popular in
random matrix theory, particularly in papers drawing analogies with the Riemann zeta function.

The nature of our goals in this paper is most easily seen by looking at the simplest non-trivial example:

\begin{theorem}\label{T:2 points}  Writing $\gamma=\tfrac2\beta$, we have
\begin{equation}\label{Two point convariance}
  \lim_{n\to\infty} n^{-\gamma} \, \E^\beta_n\Bigl\{ Z_n(e^{ix/n}) \overline{Z_n(e^{-ix/n})} \Bigr\} =
    \pi^{\frac12} (2x)^{\frac12 - \gamma} e^{-ix} J_{\gamma-\frac12}(x)
\end{equation}
for any $x\in\R$.  Here $J_\nu$ denotes the Bessel function of the first kind and order $\nu$.
\end{theorem}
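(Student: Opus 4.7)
The plan is to reduce the autocorrelation to a polynomial in $\cos(x/n)$, identify that polynomial with a Gegenbauer polynomial via a Selberg-type integral, and then invoke the Mehler--Heine formula. The first ingredient is the trigonometric identity
\[
\bigl(1-e^{-ix/n}e^{i\theta}\bigr)\bigl(1-e^{-ix/n}e^{-i\theta}\bigr) \;=\; 2e^{-ix/n}\bigl(\cos(x/n)-\cos\theta\bigr),
\]
which, since $\overline{Z_n(e^{-ix/n})}=\prod_j\bigl(1-e^{-ix/n}e^{-i\theta_j}\bigr)$, yields
\[
Z_n(e^{ix/n})\,\overline{Z_n(e^{-ix/n})} \;=\; 2^n e^{-ix}\prod_{j=1}^n\bigl(\cos(x/n)-\cos\theta_j\bigr).
\]
It therefore suffices to analyze the polynomial $P_n(y):=\E^\beta_n\!\bigl[\prod_{j}(y-\cos\theta_j)\bigr]$ at $y=\cos(x/n)$. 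By the symmetry $\theta_j\mapsto-\theta_j$ of \eqref{CGbeta}, $P_n$ is monic of degree $n$ and has parity $(-1)^n$.

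The central claim to establish is
\[
P_n(y) \;=\; \frac{n!}{2^n\, (\gamma)_n}\, C_n^{(\gamma)}(y), \qquad \gamma = 2/\beta,
\]
where $C_n^{(\gamma)}$ is the ultraspherical (Gegenbauer) polynomial. This matches the direct calculations $P_1(y)=y$, $P_2(y)=y^2-\tfrac14$ (at $\gamma=1$), and $P_2(y)=y^2-\tfrac13$ (at $\gamma=\tfrac12$), so the normalization is correct. The proof would proceed via the generating identity $(1-2ty+t^2)^{-\gamma}=\sum_{n\ge 0} t^n C_n^{(\gamma)}(y)$: one evaluates $\E^\beta_n\!\bigl[\prod_{j}(1-2t\cos\theta_j+t^2)^{-\gamma}\bigr]$ by a Morris/Aomoto-style constant-term identity and matches powers of $y$. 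Equivalently, one can expand $\prod_j(y-\cos\theta_j)$ in the Jack polynomial basis orthogonal for $|\Delta|^\beta$ on the torus; only the constant symmetric function contributes to the integral.

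With the Gegenbauer identity in place, the Mehler--Heine asymptotic (via $C_n^{(\gamma)}\propto P_n^{(\gamma-1/2,\gamma-1/2)}$ and the Jacobi form, Szeg\H{o} Theorem~8.1.1) gives
\[
C_n^{(\gamma)}\!\bigl(\cos(x/n)\bigr) \;\sim\; \frac{\Gamma(\gamma+\tfrac12)}{\Gamma(2\gamma)}\, n^{2\gamma-1}\,(x/2)^{1/2-\gamma}\, J_{\gamma-1/2}(x).
\]
Combining this with Stirling's estimate $n!/(\gamma)_n \sim \Gamma(\gamma)\,n^{1-\gamma}$ and the Legendre duplication formula $\Gamma(\gamma)\Gamma(\gamma+\tfrac12)=2^{1-2\gamma}\sqrt{\pi}\,\Gamma(2\gamma)$ collapses the overall constant to $\sqrt{\pi}\,(2x)^{1/2-\gamma}$; restoring the $2^n e^{-ix}$ prefactor from the first step then reproduces \eqref{Two point convariance}.

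The algebraic reduction and the special-function asymptotics are routine; the heart of the argument is the Gegenbauer identity for $P_n$. For $\beta\in\{1,2,4\}$ it can be extracted from classical Toeplitz-determinant or Pfaffian evaluations, but treating all $\beta\in(0,\infty)$ uniformly requires either the Aomoto extension of Selberg's integral or Jack/Macdonald polynomial orthogonality---presumably the key tool behind the authors' remark that their method differs from prior work.
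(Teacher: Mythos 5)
Your reduction to $P_n(\cos(x/n))$ is slick and the Mehler--Heine endgame is correct: I have checked that the factor $n!/(\gamma)_n\sim\Gamma(\gamma)n^{1-\gamma}$, the Jacobi--Gegenbauer conversion, and the Legendre duplication indeed collapse to $\sqrt{\pi}(2x)^{1/2-\gamma}$ as claimed. But as you yourself flag, the argument has a genuine hole at its center: the identity $P_n(y)=\tfrac{n!}{2^n(\gamma)_n}C_n^{(\gamma)}(y)$ is asserted, verified for $n\le 2$, and then only gestured at. Checking small cases is not a proof, and neither of your two suggested avenues is carried out. The generating-function sketch does not parse as written, since $\E^\beta_n\bigl[\prod_j(1-2t\cos\theta_j+t^2)^{-\gamma}\bigr]$ is not a generating function for the polynomials $P_n$ over $n$ (each $n$ is a different ensemble); and the Jack-polynomial route is non-trivial because $|\Delta(e^{i\theta_1},\ldots,e^{i\theta_n})|^{\beta}$ is \emph{not} a function of $\cos\theta_1,\ldots,\cos\theta_n$ alone (it depends on $\cos(\theta_j-\theta_k)$), so the pushforward of \eqref{CGbeta} under $\theta\mapsto\cos\theta$ is not the Jacobi $\beta$-ensemble, and Aomoto's Jacobi-polynomial evaluation does not transfer off the shelf. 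In short, the identity you single out as ``the heart of the argument'' is precisely what still needs proving; without it you have verified the shape of the answer, not established it.

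It is also worth noting that the paper proves Theorem~\ref{T:2 points} by an entirely different and self-contained route that avoids any finite-$n$ special-function evaluation. Via the random Verblunsky representation of Theorem~\ref{T:KN_C}, the autocorrelation becomes the action on a fixed vector of a product of $2^R\times 2^R$ transfer matrices $\calA_k(\vec x/n)$ (Proposition~\ref{P:all about calA}); after rescaling, this product converges to the $t=1$ value of the linear ODE \eqref{E:ODE again}, which for $R=2$, $r=1$ is a $2\times 2$ system solved explicitly in Bessel functions in Corollary~\ref{C:Answer 2}. Theorem~\ref{T:2 points} is then the specialization $w=x$, $y=-x$. That machinery is uniform in $\beta$, covers arbitrary complex $w,y$ and higher-order autocorrelations, and is independent of Selberg/Aomoto/Jack technology --- which is the actual sense in which the authors' method ``differs from prior work,'' rather than being a cleaner route to the Gegenbauer identity as you conjecture. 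By contrast, your reduction to a polynomial in $\cos(x/n)$ leans on the special symmetry $\bar y=-w$ with $x$ real, and would not extend to Corollary~\ref{C:Answer 2}, let alone Theorem~\ref{T:main}.
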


This result is a special case of Corollary~\ref{C:Answer 2}. The most general result in this paper is Theorem~\ref{T:main} which evaluates
autocorrelations
\begin{equation}\label{Multi point convariance}
  \lim_{n\to\infty} n^{-2rq/\beta} \, \E^\beta_n\biggl\{ \prod_{j=1}^{q} Z_n(e^{iw_j/n})
        \prod_{k=1}^r \overline{Z_n(e^{iy_k/n})} \biggr\},
\end{equation}
for general tuples of complex numbers $(w_1,\ldots,w_q)$ and $(y_1,\ldots,y_r)$, in terms of the solution to a certain
system of linear ODEs.  While unable to give the general solution of the relevant systems of ODEs, our results still
reveal something.  In particular, we see that \eqref{Multi point convariance} is an analytic function of all
parameters, not only $w_j$ and $y_k$, but also of $\beta$.  This indicates that no phase transition takes place, at
least at the level of the characteristic polynomial.

Notice that in \eqref{Two point convariance} and \eqref{Multi point convariance} we are sending $n$, the number of
particles (or eigenvalues), to infinity, while rescaling the locations at which we evaluate the characteristic polynomial by
$1/n$.  This is termed the microscopic thermodynamic limit; it reveals behaviour at the scale of the typical
inter-particle distance amid a sea of particles.  It is in this scaling that random matrix behaviour is believed to be
universal; see, for example, \cite{Berry,Bohigas,KatzSarnak,Mehta,Montgomery}.

As noted earlier, the case $\beta=2$ of our model corresponds to the eigenvalues of a random element of the unitary
group, while two further special values, namely, $\beta=1$ and $\beta=4$, arise as eigenvalue distributions
for certain symmetric spaces of matrices.  These three models are completely integrable in some
sense, in particular, the correlation functions for the point processes have simple determinantal/Pfaffian expressions.
In these three cases, the study of moments of the characteristic polynomial is fully developed: not only have the
autocorrelations \eqref{Multi point convariance} been evaluated, but their values for finite $n$ and for rational
functions of $Z_n$ are also known.  A sampling of the work in this direction can be found in the papers
\cite{AkemannV,AndreevSimons,BasorForrester,BorodinStrahov,BrezinHikami,BaikDeiftStrahov,ConreyFKRSnaith,MehtaN,StrahovFyod}.  We draw
particular attention to \cite{BorodinStrahov} which completed the program in the Gaussian case and provides further references.

Very little appears to be known about the case of general $\beta$; certainly the asymptotics of rational functions of $Z_n$ are
unknown, for these would determine the correlation functions of the point process, which is an outstanding open problem.
Two papers of particular note are \cite{Aomoto,Kaneko}, which consider characteristic polynomials for the Jacobi ensemble
at general temperature from the perspective of the Selberg integral.  The paper of Aomoto computes the expected value of the
characteristic polynomial for general size $n$ and point $x\in\C$; the answer is essentially a Jacobi polynomial.  (For a proof
of this result via the approach of this paper, see \cite[Proposition~6.1]{KN1}.)

The paper \cite{Kaneko} of Kaneko discusses autocorrelations of the characteristic polynomial at finite $n$.  It is proved that the
autocorrelations obey a system of second-order PDEs with the locations at which the characteristic polynomial is evaluated as independent
variables.  It is also shown that the autocorrelations can be expressed as a hypergeometric function of matrix argument (in the sense
of \cite{Herz}); this is defined as an infinite series (with summation over partitions) of Jack polynomials.

In this paper we apply simple analytical methods combined with a change of variables inspired by the theory of orthogonal polynomials.
While the approach described here can also be applied in the Jacobi setting, by using \cite[Theorem~1.5]{KN1} in place of
Theorem~\ref{T:KN_C} below, the development would be quite messy.  We have chosen to confine our attention to the circular case
because it most clearly shows both the virtues and the limitations of our method.

\section*{Acknowledgements}
The first author was supported, in part, by NSF grant DMS-0701085.

\section{Preliminaries}

We first give a quick review of the connection between the eigenvalue problem and orthogonal polynomials;
see \cite{OPUC1} for further information.

Let $U$ be an $n\times n$ unitary matrix $U$ for which
\begin{equation}\label{powers}
e_1,\ Ue_1,\ U^2 e_1,\ U^3e_1,\ \ldots, U^{n-1} e_1
\end{equation}
are linearly independent; here $e_1=[1,0,\ldots,0]^T$.  This condition holds generically, though it does require that all eigenvalues
be simple.  Applying the Gram-Schmidt procedure to the vectors in \eqref{powers}, we find a sequence of \emph{monic} polynomials $\Phi_k(z)$
so that $\{ \Phi_k(U)e_1 \}$ is an othogonal set.  Note that $\Phi_0(z)=1$.  If we define `reversed' polynomials
\begin{equation}\label{Phi*}
\Phi_{k}^*(z) = z^k \overline{\Phi_{k}(\bar z^{-1})},
\end{equation}
then both $[\Phi_{k+1}(U) - U\Phi_k(U)]e_1$ and $\Phi^*_k(U)e_1$ are perpendicular to $\{Ue_1,\ldots,U^{k}e_1\}$ and so must be co-linear.  This leads
us to the recurrence relations
\begin{equation}
\begin{aligned}\label{SzegoRec}
\Phi_{k+1}(z) &= z\Phi_{k}(z) - \bar\alpha_k \Phi_{k}^*(z) \\
\Phi_{k+1}^*(z) &= \Phi_{k}^*(z) - \alpha_k z\Phi_{k}(z)
\end{aligned}\quad\text{with}\quad \bar\alpha_k = \frac{\langle\Phi^*_k(U)e_1,U\Phi_k(U)e_1\rangle}{\|\Phi^*_k(U)e_1\|^2}\in \D
\end{equation}
where $\D$ denotes the open unit disk in the complex plane.  For $0\leq k < n-1$, as above, $U\Phi_k(U)e_1$ cannot be a multiple of $\Phi^*_k(U)e_1$
since it would contradict linear independence in \eqref{powers}; this shows $|\alpha_k|<1$.  Running the same argument with $k=n-1$ reveals
\begin{equation}\label{Z from Phi}
z^n Z_n(z) = z \Phi_{n-1}(z) - e^{-i\eta} \Phi^*_{n-1}(z) \quad\text{for some $\eta\in[0,2\pi)$;}
\end{equation}
indeed, $\det(U) = (-1)^{n-1} e^{-i\eta}$.  Note that the parameters $\{\alpha_0,\ldots,\alpha_{n-1},\eta\}$ do not determine the
matrix $U$ uniquely, but merely up to a change of basis fixing $e_1$.  There are several systems of canonical representatives
for these equivalence classes. One such system, known as CMV matrices, \cite{CMV,Watkins}, is to be noted for its sparsity.

Following the prevailing parlance among those working with orthogonal polynomials, we will refer to
$(\alpha_0,\ldots,\alpha_{n-1},\eta)$ as the \emph{Verblunsky coefficients}.

If we choose $U$ at random according to Haar measure, then the Verblunsky coefficients are also random. Their joint law
was computed in \cite{KN1} by mimicking an argument of Trotter, \cite{Trotter}, in a related self-adjoint setting. They
are statistically independent with $\alpha_k \sim \Theta_{2(n-k)-1} $ and $\eta$ uniform on $[0,2\pi)$.  The
$\Theta_\nu$ probability distribution is defined as follows:

\begin{definition}  A complex random variable, $\alpha$, with values in the unit disk,
$\D$, is \emph{$\Theta_\nu$-distributed} (for $\nu>1$) if
\begin{equation}\label{E:ThetaDefn}
\E\{f(\alpha)\} = \tfrac{\nu-1}{2\pi} \int\!\!\!\int_\D f(z)
(1-|z|^2)^{(\nu-3)/2} \,d^2z.
\end{equation}
The $\nu=1$ limit corresponds to $\alpha$ uniformly distributed on the unit circle, $\partial\D$.  The $\nu\to\infty$
limit, which is relevant to the case $\beta=\infty$ (i.e., zero temperature), corresponds to $\alpha\equiv 0$.
\end{definition}

\begin{remark}\label{R:rot}
The distribution is rotationally invariant, that is, $\alpha$ and $e^{i\theta}\alpha$ follow the same law.  This
observation results in significant simplifications in what follows.
\end{remark}

\begin{remark}\label{R:ThetaMoments}
The moments of $\alpha\sim\Theta_\nu$ are given by
\begin{equation}\label{E:ThetaMoments}
\E\bigl\{ \alpha^p \bar \alpha^q \bigr\} = \delta_{pq} \frac{2^p \, p!}{(\nu+1)(\nu+3)\cdots(\nu+2p-1)},
\end{equation}
as is easily seen by switching to polar coordinates and recognizing Euler's Beta integral.
\end{remark}

Directly inspired by work of Dumitriu and Edelman in the self-adjoint case, \cite{DumE}, Killip and Nenciu proved

\begin{theorem}[{\cite[Theorem~1.2]{KN1}}]\label{T:KN_C}
Fix $\beta\in[0,\infty]$, let $\alpha_k\sim\Theta_{\beta(k+1)+1}$ be independent random variables, and let $e^{i\eta}$
be independent and uniformly distributed on~$\partial\D$.  Then the zeros of the function
\begin{align}\label{Z from Phi 2}
Z_n(z) &= z^{1-n} \Phi_{n-1}(z) - e^{-i\eta} z^{-n} \Phi^*_{n-1}(z),
\end{align}
defined by solving the recurrence \eqref{SzegoRec} with initial data $\Phi_0(z)=\Phi^*_0(z)=1$,
are distributed on the unit circle according to \eqref{CGbeta}.
\end{theorem}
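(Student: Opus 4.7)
Since both coordinate systems have real dimension $2n-1$, the plan is to compute the Jacobian of the change of variables from Verblunsky data $(\alpha_0,\ldots,\alpha_{n-2},\eta)$ to spectral data $(\theta_1,\ldots,\theta_n;p_1,\ldots,p_n)$ of the associated CMV matrix, where $p_j:=|\langle v_j,e_1\rangle|^2$ are the squared first components of the normalized eigenvectors. Once this Jacobian is in hand, pushing the product law $\prod_k\Theta_{\beta(k+1)+1}\otimes\frac{d\eta}{2\pi}$ forward through it and then integrating out the $p_j$'s should yield \eqref{CGbeta}.

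The key algebraic input is a finite-dimensional Szeg\H{o}-type identity of the form
\begin{equation*}
\prod_{1\leq i<j\leq n}|e^{i\theta_i}-e^{i\theta_j}|^2\,\prod_{j=1}^n p_j \;=\; \prod_{k=0}^{n-2}(1-|\alpha_k|^2)^{n-1-k},
\end{equation*}
which follows from Heine's norm formula $\|\Phi_k\|^2=\prod_{j<k}(1-|\alpha_j|^2)$ combined with the evaluation of $\Phi_n$ against the point masses of the spectral measure via Lagrange interpolation. Together with a direct Jacobian computation for $(\alpha,\eta)\mapsto(\theta,p)$ --- rendered tractable by the sparsity of the CMV matrix, in the spirit of Dumitriu--Edelman --- this produces an explicit formula for the ratio of Lebesgue measures, involving only known powers of the factors $(1-|\alpha_k|^2)$.

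Since the $\Theta_{\beta(k+1)+1}$ law contributes a density factor $(1-|\alpha_k|^2)^{\beta(k+1)/2-1}$, and these exponents have been calibrated precisely so that, after combining with the Jacobian, every $\alpha$-dependent factor is consumed, the pushforward density on $(\theta,p)$ collapses to $c_{n,\beta}\,|\Delta(e^{i\theta})|^\beta\prod_j p_j^{\beta/2-1}$. This factorizes immediately: the eigenangles $(\theta_j)$ follow \eqref{CGbeta} while the weights $(p_j)$ form an independent Dirichlet vector with all parameters equal to $\beta/2$. Integrating out the weights using the standard Dirichlet normalization fixes the leading constant to match \eqref{CGbeta} exactly.

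The hard part will be the Jacobian calculation: while the Heine-type identity is elementary once stated, matching the exponents of $(1-|\alpha_k|^2)$ on both sides is a delicate combinatorial exercise. A clean route is to factor the Verblunsky-to-spectrum map through a sequence of rank-one CMV-style updates, one per coefficient, and apply Trotter's interlacing method one step at a time; this is the natural adaptation to OPUC of the self-adjoint Dumitriu--Edelman computation, and is what makes the calibration $\nu=\beta(k+1)+1$ feel inevitable rather than coincidental.
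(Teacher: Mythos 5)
This statement is imported from \cite[Theorem~1.2]{KN1}; the paper does not prove it, so there is no internal proof to compare your sketch against. That said, your overall scheme --- change variables from Verblunsky data $(\alpha,\eta)$ to spectral data $(\theta,p)$, invoke a Heine-type identity to convert the Vandermonde-and-weights factor into a product of $(1-|\alpha_k|^2)$'s, compute the Jacobian \`a la Trotter/Dumitriu--Edelman, and observe that the $\Theta_\nu$ densities cancel the $\alpha$-dependence leaving $|\Delta|^\beta\prod p_j^{\beta/2-1}$ --- is indeed the route taken in \cite{KN1}.

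There is, however, a concrete gap in the calibration step. The Heine identity you quote,
\begin{equation*}
\prod_{k=0}^{n-2}(1-|\alpha_k|^2)^{n-1-k}\;=\;|\Delta(e^{i\theta})|^2\prod_{j=1}^n p_j,
\end{equation*}
assigns $\alpha_0$ the \emph{largest} exponent $n-1$. The Jacobian is a fixed geometric quantity, independent of $\beta$; from the known $\beta=2$ case (Haar on $U(n)$, for which the paper records $\alpha_k\sim\Theta_{2(n-k)-1}$) one finds $|J|\propto\prod_k(1-|\alpha_k|^2)\big/\prod_j p_j$. Feeding these two facts into your computation, the exponents cancel to produce $|\Delta|^\beta\prod p_j^{\beta/2-1}$ only when the density of $\alpha_k$ is $\propto(1-|\alpha_k|^2)^{\beta(n-1-k)/2-1}$, i.e.\ $\alpha_k\sim\Theta_{\beta(n-1-k)+1}$, so that $\alpha_0$ also carries the largest exponent in the density. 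The theorem, by contrast, stipulates $\alpha_k\sim\Theta_{\beta(k+1)+1}$ --- the \emph{reversed} ordering, in which $\alpha_0\sim\Theta_{\beta+1}$ has the smallest exponent. As written, your pushforward would yield $\prod_k(1-|\alpha_k|^2)^{\beta(k+1)/2}/\prod p_j$, which is not $(|\Delta|^2\prod p_j)^{\beta/2}/\prod p_j$. The bridge is an additional lemma, unmentioned in your sketch but flagged explicitly in the surrounding text of the paper (``Proposition~B.2 from \cite{KN1} shows that this does not affect the distribution of the `eigenvalues'\,''): reversing the order of the Verblunsky coefficients is a measure-preserving involution on the zero set, even though it is \emph{not} measure-preserving on the weight vector $(p_j)$. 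So your claim that $\nu=\beta(k+1)+1$ ``feels inevitable'' from the step-by-step Jacobian is backwards; what falls out of that computation is $\nu=\beta(n-1-k)+1$, and the reversed form quoted in the theorem is a deliberate reindexing, justified by a separate argument, chosen because it simplifies the later analysis of the product $\calA_{n-2}\cdots\calA_0$.
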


Note that the laws for the parameters $\alpha_k$ is reversed relative to the case of the unitary group discussed
earlier (for which $\beta=2$).  Proposition~B.2 from \cite{KN1} shows that this does not affect the distribution of the
`eigenvalues'; however it does simplify many formulae below.

From \eqref{Z from Phi 2} and \eqref{Phi*}, we see that
\begin{equation}\label{Z bar}
\overline{ Z_n(e^{ix}) } = - e^{i\eta+in\bar x} Z(e^{i\bar x})
\end{equation}
for any $x\in\C$.  In this way, our basic object of investigation can be re-written as follows:
\begin{equation}\label{E:Z^R}
\begin{aligned}
\E^\beta_n\biggl\{ \prod_{j=1}^q Z_n & (e^{iw_j/n}) \prod_{k=1}^r \overline{Z_n(e^{iy_k/n})} \biggr\} \\
   &= (-1)^r \biggl(\prod_{k=1}^r e^{i\bar y_k}\biggr) \E^\beta_n\biggl\{ e^{ir\eta} \prod_{p=1}^R Z_n\bigl(e^{ix_p/n}\bigr) \biggr\}
\end{aligned}
\end{equation}
where $R=q+r$ and the vector $\vec{x}$ is defined via
\begin{equation}\label{E:concat}
\vec x = (x_1,\ldots,x_R) = (w_1,\ldots,w_{R-r},\bar y_1,\ldots, \bar y_r).
\end{equation}
This reduces our task to something of a much more symmetric form.  In the next section we upgrade the recurrence
\eqref{SzegoRec} to a recurrence for $R$-fold products of orthogonal polynomials evaluated at $R$ different points, as
is required to attack \eqref{E:Z^R}.  This reduces the problem to the analysis of a matrix product.  Determining the
dominant eigenvalue of each factor requires the following:

\begin{lemma}\label{L:blah}
If $\alpha$ is $\Theta_\nu$-distributed and $\Re \lambda+\mu > -1$, then
\begin{equation}\label{blah1}
\E\Big\{ (1-\alpha)^{\lambda}(1-\bar\alpha)^\mu \Big\}
= \frac{ \Gamma(\lambda+\mu+\tfrac{\nu+1}2)\,\Gamma(\tfrac{\nu+1}2) }%
    { \Gamma(\lambda+\tfrac{\nu+1}2)\Gamma(\mu+\tfrac{\nu+1}2) }.
\end{equation}
Sending $\nu\downarrow 1$ we obtain a special case of Euler's beta integral:
\begin{equation}\label{blah2}
\int_0^{2\pi} (1-e^{i\eta})^{\lambda}(1-e^{-i\eta})^{\mu} \frac{d\eta}{2\pi}
=  \frac{ \Gamma(\lambda+\mu+1) }{ \Gamma(\lambda+1)\Gamma(\mu+1) }.
\end{equation}
\end{lemma}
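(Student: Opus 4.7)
My plan is to expand each factor as a binomial series, use the moment formula of Remark~\ref{R:ThetaMoments} to collapse the resulting double sum, and recognize the outcome as a Gauss hypergeometric function evaluated at $1$. The second identity will then fall out by sending $\nu\downarrow 1$.

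Since $|\alpha|<1$ almost surely, the principal-branch binomial series
\begin{equation*}
(1-\alpha)^\lambda = \sum_{p\geq 0}\binom{\lambda}{p}(-\alpha)^p,
\qquad
(1-\bar\alpha)^\mu = \sum_{q\geq 0}\binom{\mu}{q}(-\bar\alpha)^q
\end{equation*}
are available. I would multiply them and pass $\E$ inside. Rotational invariance (Remark~\ref{R:rot}), or equivalently the explicit formula \eqref{E:ThetaMoments}, forces all pairs with $p\ne q$ to contribute zero. Writing $\binom{\lambda}{p}(-1)^p = (-\lambda)_p/p!$ with the usual Pochhammer notation, and observing that $(\nu+1)(\nu+3)\cdots(\nu+2p-1) = 2^p\,(\tfrac{\nu+1}{2})_p$, the surviving diagonal sum reorganizes to
\begin{equation*}
\E\bigl\{ (1-\alpha)^\lambda (1-\bar\alpha)^\mu \bigr\}
 = \sum_{p\geq 0}\frac{(-\lambda)_p(-\mu)_p}{p!\,(\tfrac{\nu+1}{2})_p}
 = {}_2F_1\!\left(-\lambda,-\mu;\tfrac{\nu+1}{2};1\right).
\end{equation*}

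I would then invoke the Gauss summation theorem, ${}_2F_1(a,b;c;1)=\Gamma(c)\Gamma(c-a-b)/[\Gamma(c-a)\Gamma(c-b)]$, valid when $\Re(c-a-b)>0$. Specializing to $(a,b,c)=(-\lambda,-\mu,\tfrac{\nu+1}{2})$ produces exactly the right-hand side of \eqref{blah1}, and the convergence requirement becomes $\Re(\lambda+\mu)>-\tfrac{\nu+1}{2}$, which is weaker than the hypothesis $\Re(\lambda+\mu)>-1$ because $\nu\geq 1$. For \eqref{blah2} I would then let $\nu\downarrow 1$: the density $\tfrac{\nu-1}{2\pi}(1-|z|^2)^{(\nu-3)/2}$ concentrates on $\partial\D$ as the uniform measure (as recorded in the definition of $\Theta_\nu$), while on the right-hand side $\Gamma(\tfrac{\nu+1}{2})\to\Gamma(1)=1$, yielding the claim.

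The main technical point to nail down is the term-by-term integration. Stirling estimates give $(-\lambda)_p(-\mu)_p/[p!(\tfrac{\nu+1}{2})_p] \sim C\,p^{-\Re(\lambda+\mu)-(\nu+1)/2-1}$, absolutely summable precisely under the stated hypothesis; a dominated-convergence or Fubini argument built on this bound then justifies the interchange of $\E$ with the two sums, and everything else is bookkeeping.
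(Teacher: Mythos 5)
Your route is the same as the paper's: the proof given there simply points the reader to the Gauss hypergeometric identity
\[
\sum_{p=0}^{\infty}\frac{(-\lambda)_p(-\mu)_p}{p!\,(\tfrac{\nu+1}{2})_p}
= \frac{\Gamma(\lambda+\mu+\tfrac{\nu+1}{2})\Gamma(\tfrac{\nu+1}{2})}{\Gamma(\lambda+\tfrac{\nu+1}{2})\Gamma(\mu+\tfrac{\nu+1}{2})}
\]
and says the rest may be reconstructed from the moment formula \eqref{E:ThetaMoments}; you have carried out exactly that reconstruction, and the algebra (rewriting $\binom{\lambda}{p}(-1)^p$ as $(-\lambda)_p/p!$, and $(\nu+1)(\nu+3)\cdots(\nu+2p-1)$ as $2^p(\tfrac{\nu+1}{2})_p$) is correct.

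One caution on the justification of the term-by-term integration, which you flag as "the main technical point." The Stirling bound you quote controls the already-collapsed diagonal series $\sum_p (-\lambda)_p(-\mu)_p/[p!(\tfrac{\nu+1}{2})_p]$, but what Fubini actually requires is absolute summability of the full double sum, i.e.\ finiteness of $\sum_{p,q}\bigl|\binom{\lambda}{p}\bigr|\bigl|\binom{\mu}{q}\bigr|\,\E\{|\alpha|^{p+q}\}$. This does not follow from the diagonal estimate, and indeed it can fail within the hypothesis $\Re(\lambda+\mu)>-1$: for $\nu$ close to $1$ the moments $\E\{|\alpha|^{2k}\}\sim c\,k^{-(\nu-1)/2}$ decay very slowly, so when $\Re\lambda$ and $\Re\mu$ are both slightly negative the double sum diverges even though the diagonal sum converges. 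The standard repair is to first prove \eqref{blah1} for $\Re\lambda,\Re\mu$ large (where Fubini does apply to the double sum), and then extend to the stated domain by analytic continuation in $\lambda$ and $\mu$, both sides being evidently analytic there. With that patch the argument is complete; the $\nu\downarrow1$ step for \eqref{blah2} is fine as you wrote it.
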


\begin{proof}
This corresponds to Lemma~2.3 in \cite{BHNY}.  The key step in their proof is the following hypergeometric sum, which
is due to Gauss (cf. Theorem 2.2.2 in \cite{AndrewsAskeyRoy}):
$$
\sum_{p=0}^{\infty} \frac{(-\lambda)_p(-\mu)_p}{p!(\frac{\nu+1}{2})_p}
    = \text{RHS\eqref{blah1}}
\quad\text{where}\quad
(z)_p=z(z+1)(z+2)\cdots(z+p-1).
$$
Indeed, the reader should have little difficulty in reconstructing the proof from this identity and
\eqref{E:ThetaMoments}.
\end{proof}

%%%%%%%%%%%%%%%%%%%%%%%%%%%%%%%%%%%%%%%%%%%%%%%%%%%
%
%
%                                   Section
%
%
%%%%%%%%%%%%%%%%%%%%%%%%%%%%%%%%%%%%%%%%%%%%%%%%%%%

\section{The basic recursion}

Given (random) Verblunsky coefficients $(\alpha_0,\ldots,\alpha_{n-1},\eta)$ and $x\in\C$ we define
$$
v_k(x) := \begin{bmatrix} e^{ikx/2} \Phi_k^*(e^{ix}) \\ e^{-ikx/2} \Phi_k(e^{ix}) \end{bmatrix}
$$
where $\Phi_k(z)$ and $\Phi_k^*(z)$ are solutions of the recurrence \eqref{SzegoRec} with initial data
$\Phi_k(z)\equiv\Phi_k^*(z)\equiv1$.  Correspondingly,
\begin{equation}\label{E:v recurr1}
v_{k+1}(x) = A_k(x) v_{k}(x) \qtq{with}
    v_0(x) = \begin{bmatrix} 1 \\1 \end{bmatrix}
\end{equation}
and
\begin{equation}\label{E:v recurr2}
   A_k(x):= \begin{bmatrix} e^{-ix/2}  & -\alpha_k e^{ix/2} \\ -\bar\alpha_k e^{-ix/2} & e^{ix/2} \end{bmatrix}
    = \begin{bmatrix} 1  & -\alpha_k \\ -\bar\alpha_k & 1 \end{bmatrix}
       \begin{bmatrix} e^{-ix/2}  & 0 \\ 0 & e^{ix/2} \end{bmatrix}
\end{equation}
For the model that we are considering (cf. Theorem~\ref{T:KN_C}), $v_k$ and $\alpha_k$ are statistically independent.
Thus, writing $\vec x = (x_1,\ldots,x_R)$ as before,
$$
\E\{v_{k+1}(x_1)\otimes\cdots\otimes v_{k+1}(x_R)\} = \calA_k(\vec x)
\E\{v_{k}(x_1)\otimes\cdots\otimes v_{k}(x_R)\}
$$
with
\begin{equation}\label{E:Acal defn}
\calA_k(\vec x) = \E\{ A_k(x_1) \otimes\cdots\otimes A_k(x_R) \}.
\end{equation}

In view of \eqref{Z from Phi 2} we have
\begin{equation}\label{E:Z from v}
Z_n(e^{ix/n}) = e^{-i(n-1)x/2n} v_{n-1,2}(x/n) - e^{-i\eta} e^{-i(n+1)x/2n} v_{n-1,1}(x/n).
\end{equation}
Thus, from \eqref{E:Z^R} we see that our goal is to determine the $n\to\infty$ asymptotics of
$$
\calA_{n-2}(\vec{x}/n) \calA_{n-3}(\vec{x}/n) \cdots \calA_{1}(\vec{x}/n) \calA_{0}(\vec{x}/n) .
$$
More precisely, we need the asymptotics of this matrix applied to the vector of ones.

Entries in the large tensor products above are naturally indexed by elements of $G_R:=\{1,2\}^R$, which we will
typically denote by $\vi=(i_1,\ldots,i_R)$ or similarly $\vj$.  We will think of the underlying vector space as
$\ell^2(G_R)$.

We have called the index set $G_R$ because it is instructive to regard it as the vertex set of a graph.  Specifically,
we define adjacency through the adjacency matrix
\begin{equation}\label{E:Delta defn}
\Delta_{\vi\,\vj} = \begin{cases} 1 & \text{if}\ \ \#\{q:i_q\neq j_q\}=2 \ \ \text{and}\ \ \sum_q (i_q-j_q)=0 \\
        0 & \text{otherwise.} \end{cases}
\end{equation}
Notice that two vertices are joined if they differ by exactly one down-flip ($1\mapsto2$) and one up-flip
($2\mapsto1$).  The up/down nomenclature corresponds to the way one indexes column vectors of length two.

The graph $G_R$ has $R+1$ connected components
$$
\Grr = \{ \vi : \textstyle\sum i_q = 2R-r \}
$$
where $r$ runs over $\{0,1,\ldots,R\}$.  Less cryptically, $\Grr$ contains those $\vi$ that consist of $r$ copies of
$1$ and $R-r$ copies of $2$.  Of course, $\ell^2(\Grr)$ is a reducing subspace for $\Delta$; we will also write
$\Delta$ for its restriction to this space.

Given $\vec{x}=(x_1,\ldots,x_R)\in\C^R$ we define a (complex) potential $V=V(\vec{x})$
\begin{equation}\label{E:V defn}
V_{\vi\,\vj} = \begin{cases} \tfrac{\sqrt{-1}}2 \sum_q (-1)^{i_q}x_q & \text{if} \ \ \vi=\vj \\ 0 & \text{otherwise.} \end{cases}
\end{equation}
We use the name `potential' to maintain the quantum-mechanical analogy we began by writing $\Delta$ for the adjacency
matrix (and regarding it as a Laplacian).  As for $\Delta$, we maintain the name $V$ for the restriction of this
operator to the invariant subspaces $\ell^2(\Grr)$.

\begin{proposition}\label{P:all about calA}
For each $\vec{x}=(x_1,\ldots,x_R)\in\C^R$ and integers $0\leq k<n$, the space $\ell^2(\Grr)$ is invariant for
$\calA_k(\vec{x}/n)$.  Moreover, we have the following:
\begin{SL}
\item The matrix $\calA_k(\vec{0})$ is real-symmetric, while for general $\vec{x}\in\C^R$,
\begin{equation}\label{E:calA:x neq 0}
\calA_k(\vec{x}/n) = \calA_k(\vec{0}) \, e^{V/n}
\end{equation}
where $V=V(\vec{x})$ is the matrix defined in \eqref{E:V defn}.
\item The characteristic function $\one_R^r\in\ell^2(G_R)$ of $\Grr$ is an eigenvector for $\calA_k(\vec{0}):$
\begin{equation}\label{E:calA:principal e-val}
\calA_k\bigl(\vec{0}\bigr) \one_R^r = \frac{ \Gamma(R+1+\tfrac{\beta}2(k+1))\,\Gamma(1+\tfrac{\beta}2(k+1)) }%
    { \Gamma(R-r+1+\tfrac{\beta}2(k+1))\Gamma(r+1+\tfrac{\beta}2(k+1)) } \one_R^r
\end{equation}
\item For $0\leq k <n$,
\begin{equation}\label{E:calA:asymptotic}
\calA_k\bigl(\tfrac{\vec{x}}{n}\bigr) = \Id + \tfrac{2}{\beta(k+1)+1}\Delta + \tfrac{1}{n}V
    + O\bigl((k+1)^{-2}).
\end{equation}
\item\label{I:Delta} $r(R-r)-\Delta$ is positive semi-definite as an operator on $\ell^2(\Grr)$.  Indeed,
$$
    \Delta\one_R^r=r(R-r)\one_R^r
$$
while no other eigenvalues have greater modulus.
\item For $0\leq k \leq m <n$
\begin{equation}\label{E:calA:prod norm}
\bigl\| \calA_m\bigl(\tfrac{\vec{x}}{n}\bigr) \cdots \calA_k\bigl(\tfrac{\vec{x}}{n}\bigr) \|_{\ell^2(\Grr)\to\ell^2(\Grr)}
    = O\bigl( (\tfrac{m+1}{k+1})^{2r(R-r)/\beta} \bigr).
\end{equation}
\end{SL}
In \eqref{E:calA:asymptotic} and \eqref{E:calA:prod norm}, the implicit constants depend on $\vec{x}$, but not $n$.
\end{proposition}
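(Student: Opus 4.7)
The approach rests on the factorization $A_k(x) = A_k(0)\,\diag(e^{-ix/2},e^{ix/2})$ from \eqref{E:v recurr2}. Taking the $R$-fold tensor product and averaging, the deterministic diagonal factor exits the expectation with entry $\exp\bigl(\tfrac{i}{2n}\sum_q(-1)^{i_q}x_q\bigr) = e^{V_{\vi\vi}/n}$, giving \eqref{E:calA:x neq 0}. The matrix element of $\calA_k(\vec 0)$ at $(\vi,\vj)$ factors as $(-1)^{p+p'}\E\{\alpha_k^p \bar\alpha_k^{p'}\}$, where $p$ and $p'$ count the positions with $(i_q,j_q)=(1,2)$ and $(2,1)$, respectively; rotational invariance (Remark~\ref{R:rot}) forces this to vanish unless $p=p'$, which is precisely the condition $\sum_q i_q = \sum_q j_q$. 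This simultaneously establishes the invariance of each $\ell^2(\Grr)$ and the real-symmetry of $\calA_k(\vec 0)$ (the surviving entries are positive reals, symmetric in $(\vi,\vj)$), completing (i). For (ii), decompose $\vec 1^{\otimes R} = \sum_r \one_R^r$ and use the tensor structure: $A_k(0)^{\otimes R}\vec 1^{\otimes R} = \bigotimes_q (A_k(0)\vec 1)$ has entry $(1-\alpha_k)^r(1-\bar\alpha_k)^{R-r}$ at every $\vi \in \Grr$, so Lemma~\ref{L:blah} with $\nu = \beta(k+1)+1$ reproduces the Gamma ratio in \eqref{E:calA:principal e-val}, and invariance upgrades this constancy on $\Grr$ to the eigenvalue equation.

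Part (iii) is a Taylor expansion: writing $A_k(0) = \Id - \alpha_k E_{12} - \bar\alpha_k E_{21}$ (with $E_{12},E_{21}$ the off-diagonal matrix units) and expanding the tensor product, the first-order terms average to zero, while the second-order cross terms sum to $\E\{|\alpha_k|^2\}\sum_{q\neq q'} E_{12}^{(q)}E_{21}^{(q')} = \tfrac{2}{\beta(k+1)+2}\Delta$, matching \eqref{E:Delta defn}. Higher-order contributions are controlled by $\E\{|\alpha_k|^4\} = O((k+1)^{-2})$, which also absorbs the mismatch between $\tfrac{2}{\beta(k+1)+2}$ and the stated $\tfrac{2}{\beta(k+1)+1}$. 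Multiplying by $e^{V/n}=\Id+V/n+O(n^{-2})$ and using $k+1\leq n$ to fold cross-terms of order $\tfrac{1}{n(k+1)}$ into the remainder yields \eqref{E:calA:asymptotic}. For (iv), every vertex of $\Grr$ has exactly $r(R-r)$ neighbors in $\Delta$ (one choice of down-flip position among the $r$ ones, one of up-flip position among the $R-r$ twos), so $\Delta$ is the adjacency matrix of a regular graph of degree $r(R-r)$; the eigenvalue identity $\Delta\one_R^r = r(R-r)\one_R^r$ and $|\lambda|\leq r(R-r)$ for every other eigenvalue are then standard Perron--Frobenius conclusions.

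Part (v) ties everything together. Parts (i), (ii), and (iv) identify $c_r^{(k)}$ from \eqref{E:calA:principal e-val} as both the Perron eigenvalue of the non-negative real-symmetric matrix $\calA_k(\vec 0)$ on $\ell^2(\Grr)$ and, hence, its operator norm; indeed every entry is strictly positive because any two vertices of $\Grr$ share the same number of $(1,2)$- and $(2,1)$-positions. Thus $\|\calA_k(\vec x/n)\|\leq c_r^{(k)}\|e^{V/n}\|\leq c_r^{(k)}e^{C/n}$ for a constant $C$ depending only on $\vec x$, and telescoping with $m<n$ bounds the product norm by $e^{C}\prod_{j=k}^m c_r^{(j)}$. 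A standard Gamma-ratio expansion gives $\log c_r^{(j)} = \tfrac{2r(R-r)}{\beta(j+1)+1} + O(j^{-2})$, hence $\sum_{j=k}^m \log c_r^{(j)} = \tfrac{2r(R-r)}{\beta}\log\tfrac{m+1}{k+1} + O(1)$, which is \eqref{E:calA:prod norm}. The main delicate point throughout is the error-term bookkeeping in (iii) and (v): one must verify that the small discrepancies—notably between the moment-derived coefficient $\tfrac{2}{\beta(k+1)+2}$ and the stated $\tfrac{2}{\beta(k+1)+1}$, and between the individual factor estimate and the telescoped product—are uniformly dominated by the claimed remainders.
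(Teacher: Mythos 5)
Your proposal is correct and follows essentially the same route as the paper. A couple of small stylistic differences are worth noting, but neither constitutes a genuinely different argument.

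For part (ii), the paper fixes a row $\vi\in\Grr$ and counts the contributions to the row sum $\sum_{\vj}\E\{|\alpha_k|^{\sum|i_q-j_q|}\}=\sum_p\binom{r}{p}\binom{R-r}{p}\E\{|\alpha_k|^{2p}\}=\E\{(1-\alpha_k)^r(1-\bar\alpha_k)^{R-r}\}$, whereas you read off the same quantity directly from $A_k(0)^{\otimes R}\vec 1^{\otimes R}=\bigotimes_q(A_k(0)\vec 1)$; these are two ways of organizing the identical computation. For part (iii), you are more explicit than the paper: you isolate the second-order cross terms, obtain the coefficient $\E\{|\alpha_k|^2\}=2/(\beta(k+1)+2)$, and explain that its discrepancy with the displayed $2/(\beta(k+1)+1)$ is absorbed by the $O((k+1)^{-2})$ remainder, as is the mixed $\calA_k(\vec 0)V/n$ term using $k+1\le n$. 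That attention is warranted, since the paper's own matrix-element formula \eqref{E:calA:matrix elements} contains a stray $\delta_{\vi\vj}$ (it should simply read $\E\{|\alpha_k|^{\sum|i_q-j_q|}\}$, which equals $1$ on the diagonal); your version avoids that typo. For part (v), your telescoping bound $\|\calA_k(\vec x/n)\|\le c_r^{(k)}\|e^{V/n}\|$ followed by the logarithmic summation is the same strategy as the paper's, only written out in more detail; both rest on identifying $c_r^{(k)}$ as the Perron eigenvalue and hence the operator norm of the positive symmetric matrix $\calA_k(\vec 0)$ on $\ell^2(\Grr)$, exactly as you argue.
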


\begin{proof}
From the rotation invariance of the law of $\alpha_k$ we see that only those entries in $A_k\otimes\cdots\otimes A_k$ that
correspond to equal numbers of up and down flips can have non-zero expectation.  Indeed, looking back to \eqref{E:v recurr2},
we see that each up-flip brings with it a factor of $\alpha_k$ and each down-flip, a factor of
$\bar\alpha_k$.  Thus $\ell^2(\Grr)$ is invariant subspace for $\calA_k(\vec{x}/n)$.  Taking this a step further in the
case $\vec{x}=\vec{0}$ we see that
\begin{equation}\label{E:calA:matrix elements}
\bigl( \calA_k(\vec{0}) \bigr)_{\vi\,\vj} = \delta_{\vi\,\vj} + \E\Bigl\{|\alpha_k|^{\sum |i_q-j_q|}\Bigr\}
    \qtq{for all} \vi,\vj\in \Grr.
\end{equation}
Not only is this real-symmetric, but all entries in this sub-matrix are positive.  Thus the full weight of the
Perron--Frobenius Theorem applies.  In particular, there is a unique eigenvalue of greatest modulus, it is positive,
simple, and the corresponding eigenvector has positive entries. Since all row sums are the same, the principal
eigenvector is the vector of ones, $\one_R^r$. To compute the principal eigenvalue, we evaluate the row sum: For all
$\vi\in \Grr$
\begin{align*}
\sum_{\vj\in \Grr} \E\Bigl\{|\alpha_k|^{\sum |i_q-j_q|}\Bigr\}
    &= \sum_{p=0}^{\min(r,R-r)} \binom{r}{p}\binom{R-r}{p} \E\bigl\{ |\alpha|^{2p} \bigr\} \\
&= \E \bigl\{ (1-\alpha)^r (1-\bar\alpha)^{R-r} \bigr\} \\
&= \frac{ \Gamma(R+1+\tfrac{\beta}2(k+1))\,\Gamma(1+\tfrac{\beta}2(k+1)) }%
    { \Gamma(R-r+1+\tfrac{\beta}2(k+1))\Gamma(r+1+\tfrac{\beta}2(k+1)) }.
\end{align*}
Note the use the rotation invariance of the law of $\alpha_k$ to obtain the second equality and the use of
Lemma~\ref{L:blah} for the third.

Equation \eqref{E:calA:x neq 0} follows immediately from the definition of $V$ and the right-hand identity in
\eqref{E:v recurr2}.

To obtain \eqref{E:calA:asymptotic} when $\vec{x}=\vec{0}$, which implies $V\equiv 0$, we simply combine
\eqref{E:ThetaMoments} and \eqref{E:calA:matrix elements}.  The case of general (i.e., non-zero) $\vec{x}$ follows from
this and \eqref{E:calA:x neq 0}.

To verify part~\eqref{I:Delta} of the proposition, we apply the Perron--Frobenius Theorem to $\Delta$ acting on
$\ell^2(\Grr)$. The principle eigenvalue is $r(R-r)$ because this is the number of neighbours of each vertex in $\Grr$.

To finish the proof of the proposition we need to verify \eqref{E:calA:prod norm}.  Using \eqref{E:calA:x neq 0}, then
the $\vec{x}=\vec{0}$ case of \eqref{E:calA:asymptotic} and part~\eqref{I:Delta} of Proposition~\ref{P:all about calA} yields
\begin{align*}
\bigl\| \calA_m\bigl(\tfrac{\vec{x}}{n}\bigr) \cdots \calA_k\bigl(\tfrac{\vec{x}}{n}\bigr) \|_{\ell^2(\Grr)\to\ell^2(\Grr)}
    &\leq e^{(m-k)\|V\|/n} \bigl\| \calA_m\bigl(\vec{0}\bigr) \cdots \calA_k\bigl(\vec{0}\bigr) \|_{\ell^2(\Grr)\to\ell^2(\Grr)} \\
&= O\biggl( \exp\biggl\{ \sum_{p=k}^{m} \frac{2r(R-r)}{\beta(p+1)+1} \biggr\} \biggr).
\end{align*}
Bounding the sum by an integral easily yields the result.
\end{proof}

In view of \eqref{E:calA:principal e-val}, we need to evaluate the asymptotics of a product of Gamma functions even
just to solve our problem in the case $\vec{x}=0$.  This is what we do next.

\begin{lemma}\label{L:prod asymp}  For $\beta\in(0,\infty)$,
\begin{equation}\label{E:prod asymp}
\!\binom{R}{r} \prod_{k=0}^{n-2} \frac{ \Gamma(R+1+\tfrac{\beta}2(k+1))\,\Gamma(1+\tfrac{\beta}2(k+1)) }%
    { \Gamma(R-r+1+\tfrac{\beta}2(k+1))\Gamma(r+1+\tfrac{\beta}2(k+1)) }
        = (C+O(\tfrac1n)) n^{\frac{2r(R-r)}{\beta}} \!\!\!
\end{equation}
as $n\to\infty$ where $C=C(r,R,\beta)$ is given by
\begin{equation}\label{E:Cdefn}
  C:= \biggl(\prod_{p=1}^{r} \frac{\Gamma(\tfrac2{\beta}p)}{\Gamma(\tfrac2{\beta}(R-r+p))} \biggr).
\end{equation}
\end{lemma}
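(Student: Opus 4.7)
The plan is to collapse each ratio of four Gamma functions into a simple rational expression, telescope the product over $k$ into Gamma functions in $n$, and then apply Stirling. Writing $a = \beta/2$ for brevity, the identity $\Gamma(x+N)/\Gamma(x)=\prod_{j=0}^{N-1}(x+j)$ applied to both numerator and denominator reduces the $k$-th factor to
$$\frac{\Gamma(R+1+a(k+1))\,\Gamma(1+a(k+1))}{\Gamma(R-r+1+a(k+1))\,\Gamma(r+1+a(k+1))} = \prod_{j=1}^{R-r} \frac{a(k+1)+r+j}{a(k+1)+j}.$$
Interchanging the products over $k$ and $j$ and setting $m=k+1$, each $j$-slice becomes $\prod_{m=1}^{n-1}\frac{m+(r+j)/a}{m+j/a}$, which telescopes via $\prod_{m=1}^{N}(m+\alpha)=\Gamma(N+1+\alpha)/\Gamma(1+\alpha)$ into
$$\frac{\Gamma(n+(r+j)/a)\,\Gamma(1+j/a)}{\Gamma(n+j/a)\,\Gamma(1+(r+j)/a)}.$$

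Next I would apply Stirling in the form $\Gamma(n+s)/\Gamma(n+t) = n^{s-t}(1+O(1/n))$, with error uniform in $s,t$ as they range over a finite set. Each $j$ then contributes a factor $n^{r/a}$, and the product over $j=1,\ldots,R-r$ produces $n^{r(R-r)/a}=n^{2r(R-r)/\beta}$, which is exactly the exponent claimed. The accompanying constant is $\prod_{j=1}^{R-r}\Gamma(1+j/a)/\Gamma(1+(r+j)/a)$, and since only finitely many $j$ are involved the $O(1/n)$ rate is preserved.

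Finally I would reconcile this constant—multiplied by $\binom{R}{r}$—with the form of $C$ given in \eqref{E:Cdefn}. Using $\Gamma(1+z)=z\Gamma(z)$ splits off the factor $\prod_{j=1}^{R-r}\tfrac{j}{r+j} = \tfrac{r!(R-r)!}{R!} = \binom{R}{r}^{-1}$, which exactly absorbs the binomial prefactor, leaving
$$\prod_{j=1}^{R-r}\frac{\Gamma(2j/\beta)}{\Gamma(2(r+j)/\beta)}.$$
Agreement with $\prod_{p=1}^{r}\Gamma(2p/\beta)/\Gamma(2(R-r+p)/\beta)$ then follows by cancelling the common $\Gamma(2\cdot/\beta)$ factors in the overlap of the two index ranges; this is also consistent with the fact that the left-hand side of \eqref{E:prod asymp} is manifestly symmetric under $r\leftrightarrow R-r$, so $C$ must be too. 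There is no real obstacle here beyond careful bookkeeping: the only mildly delicate point is recognising the telescoping structure hidden inside the ratio of four Gammas and then matching the asymmetric form of $C$.
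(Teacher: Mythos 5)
Your proposal is correct and follows essentially the same route as the paper: decompose the ratio of four Gamma functions into a finite rational product, interchange the double product so the outer index telescopes into Gamma functions of $n$, and finish with Stirling in the form $\Gamma(n+s)/\Gamma(n+t)=n^{s-t}(1+O(1/n))$. The only (cosmetic) differences are that you expand the inner product over $j=1,\ldots,R-r$ rather than $p=1,\ldots,r$, and you carry the prefactor $\binom{R}{r}$ along and absorb it at the end via $\Gamma(1+z)=z\Gamma(z)$, whereas the paper folds it in at the start as an $l=0$ term of the product; the resulting constant $\prod_{j=1}^{R-r}\Gamma(2j/\beta)/\Gamma(2(r+j)/\beta)$ is indeed identical to \eqref{E:Cdefn}, as your $r\leftrightarrow R-r$ symmetry observation confirms.
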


\begin{proof}
Noting that the binomial coefficient corresponds to a $k=-1$ term in the product and setting $l=k+1$, yields
\begin{align*}
\text{LHS\eqref{E:prod asymp}}
&= \prod_{l=0}^{n-1} \biggl( \frac{ \Gamma(R+1+\tfrac{\beta}2 l)}{ \Gamma(R-r+1+\tfrac{\beta}2 l)}
    {}\div{} \frac{ \Gamma(r+1+\tfrac{\beta}2 l) }{ \Gamma(1+\tfrac{\beta}2 l) } \biggr)\\
&= \prod_{l=0}^{n-1} \prod_{p=1}^{r} \frac{R-r+p+\tfrac{\beta}2 l}{p+\tfrac{\beta}2 l}\\
&= \prod_{p=1}^{r} \prod_{l=0}^{n-1} \frac{l + \tfrac2{\beta}(R-r+p)}{l + \tfrac2{\beta}p}\\
&= \prod_{p=1}^{r} \frac{\Gamma\bigl(n+\tfrac2{\beta}(R-r+p)\bigr)}{\Gamma\bigl(\tfrac2{\beta}(R-r+p)\bigr)} \frac{\Gamma(\tfrac2{\beta}p)}{\Gamma(n+\tfrac2{\beta}p)}
\end{align*}
The result now follows from
$$
    n^{b-a} \frac{\Gamma(n+b)}{\Gamma(n+a)} = 1 + O(\tfrac1n),
$$
which is a consequence of Stirling's formula (cf. \cite[Theorem~1.4.2]{AndrewsAskeyRoy}).
\end{proof}

\begin{lemma}\label{L:A prod asymp}  Fix $\beta\in(0,\infty)$, integers $0\leq r \leq R$ and $\vec{x}\in\C^R$.
Let $\one_R^r\in\ell^2(G_R)$ be the characteristic function of $\Grr$, then for any integers $0<m<n$,
\begin{align*}
m^{-2r(R-r)/\beta} \tbinom{R}{r} \calA_{m-1}(\vec{x}/n) \cdots \calA_{1}(\vec{x}/n) \calA_{0}(\vec{x}/n) \one_R^r
&= C  \, \one_R^r + O\bigl( \tfrac{m}n + \tfrac{1}{m} \bigr)
\end{align*}
where $C$ is the constant given by \eqref{E:Cdefn}. The constant implicit in the $O$ notation depends on $\vec{x}$ but
may be chosen independent of $m$ and $n$.
\end{lemma}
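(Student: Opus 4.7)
The plan is to reduce to the case $\vec{x}=\vec{0}$, where the answer follows immediately from part~(ii) of Proposition~\ref{P:all about calA} together with Lemma~\ref{L:prod asymp}, and then to control the dependence on $\vec{x}$ via a telescoping (Duhamel) expansion against the factorization \eqref{E:calA:x neq 0}.

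First I would handle the case $\vec{x}=\vec{0}$. By part~(ii) of Proposition~\ref{P:all about calA}, the vector $\one_R^r$ is an eigenvector of every $\calA_k(\vec 0)$; write $\lambda_k$ for the corresponding eigenvalue. Then
\begin{equation*}
\calA_{m-1}(\vec 0)\cdots\calA_0(\vec 0)\,\one_R^r = \Bigl(\prod_{k=0}^{m-1}\lambda_k\Bigr)\,\one_R^r.
\end{equation*}
The computation carried out in the proof of Lemma~\ref{L:prod asymp} applies verbatim with the upper index $n-2$ replaced by $m-1$, giving $\binom{R}{r}\prod_{k=0}^{m-1}\lambda_k = \bigl(C+O(1/m)\bigr)\,m^{2r(R-r)/\beta}$. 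This proves the lemma at $\vec{x}=\vec 0$ with error $O(1/m)$.

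To pass to general $\vec{x}$, I would use \eqref{E:calA:x neq 0} to write $\calA_k(\vec x/n)-\calA_k(\vec 0) = \calA_k(\vec 0)\bigl(e^{V/n}-\Id\bigr)$. Since $V=V(\vec x)$ is a fixed finite-dimensional matrix and $\|\calA_k(\vec 0)\|=\lambda_k$ is uniformly bounded in $k$ (note $\lambda_k\to 1$ as $k\to\infty$ by Stirling), this difference has operator norm $O(1/n)$ with implicit constant depending only on $\vec x$. Then apply the standard telescoping identity
\begin{equation*}
\prod_{k=m-1}^{0}\calA_k(\vec x/n)-\prod_{k=m-1}^{0}\calA_k(\vec 0) = \sum_{j=0}^{m-1}\Bigl[\prod_{k=m-1}^{j+1}\calA_k(\vec x/n)\Bigr]\bigl(\calA_j(\vec x/n)-\calA_j(\vec 0)\bigr)\Bigl[\prod_{k=j-1}^{0}\calA_k(\vec 0)\Bigr].
\end{equation*}
Applied to $\one_R^r$, the rightmost product collapses to a scalar of size $\prod_{k=0}^{j-1}\lambda_k = O(j^{2r(R-r)/\beta})$ times $\one_R^r$; the middle factor contributes $O(1/n)$; and the leftmost product has norm $O((m/(j+1))^{2r(R-r)/\beta})$ by part~(v) of Proposition~\ref{P:all about calA}. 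Each of the $m$ terms is therefore of size $O(m^{2r(R-r)/\beta}/n)$, so the whole difference has norm $O(m^{2r(R-r)/\beta+1}/n)$, which after multiplication by the prefactor $m^{-2r(R-r)/\beta}$ contributes $O(m/n)$. Combining with the $\vec x=\vec 0$ case yields the claimed asymptotic with error $O(m/n+1/m)$.

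The only subtlety I anticipate is the handling of the boundary terms $j=0$ and $j=m-1$ of the telescope, where one of the flanking products is empty; but in exactly those cases the corresponding norm bound trivializes (the product equals $\Id$), so the estimate still goes through. Everything else is routine bookkeeping.
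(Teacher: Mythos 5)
Your proposal is correct and follows the same strategy as the paper: reduce to $\vec{x}=\vec{0}$ via the factorization \eqref{E:calA:x neq 0} and the product-norm bound \eqref{E:calA:prod norm}, then apply \eqref{E:calA:principal e-val} together with Lemma~\ref{L:prod asymp}. The only difference is one of exposition — you write out the telescoping (Duhamel) sum explicitly and track how it acts on $\one_R^r$, whereas the paper compresses the same estimate into the one-line bound $m\,\|e^{V/n}-1\|\cdot O(m^{2r(R-r)/\beta})$ for the operator-norm difference of the two products.
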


\begin{proof}
Using \eqref{E:calA:x neq 0} and \eqref{E:calA:prod norm},
\begin{align*}
& \bigl\| \calA_{m-1}(\vec{x}/n) \cdots \calA_{0}(\vec{x}/n) - \calA_{m-1}(\vec{0}) \cdots \calA_{0}(\vec{0})\bigr\| \\
\leq {}& m  \|e^{V/n} -1\| \cdot O\bigl( m^{2r(R-r)/\beta} \bigr) \\
={} & O \bigl(\tfrac{m}n m^{2r(R-r)/\beta} \bigr).
\end{align*}
Where all norms are in the sense of operators on $\ell^2(\Grr)$.  This reduces the problem to the case $\vec{x}=0$, for
which we apply \eqref{E:calA:principal e-val} and Lemma~\ref{L:prod asymp}:
$$
\bigl\| \tbinom{R}{r} \calA_{m-1}(\vec{0}) \cdots \calA_{0}(\vec{0}) \one_R^r - C \one_R^r m^{\frac{2r(R-r)}{\beta}} \bigr\|
    = O\bigl( \tfrac1m \;\!m^{2r(R-r)/\beta} \bigr),
$$
so completing the proof of the lemma.
\end{proof}

\begin{lemma}\label{L:ODE}
Fix $\beta\in(0,\infty)$ and integers $0\leq r \leq R$. For each $\vec{x}\in\C^R$, there is a unique $C^1$ function
$\Psi(\cdot,\vec{x}):(0,\infty)\to \ell^2(\Grr)$ such that
\begin{equation}\label{E:ODE}
\frac{d\ }{dt} \Psi(t,\vec{x}) = \bigl( \tfrac{2}{\beta t} \Delta + V \bigr) \Psi(t,\vec{x})
    \qtq{and}  t^{-2r(R-r)/\beta} \Psi(t) =  \one_R^r + O(t)
\end{equation}
as $t\downarrow 0$. Here $\one_R^r$ is the vector of ones in
$\ell^2(\Grr)$. Moreover, $\Psi$ is an entire function of $\vec{x}$. It is entire in $t$ except for a possible branch
point at $t=0$ and admits a holomorphic continuation in $\beta$ outside the (real) interval $(-2r(R-r),0)$.
\end{lemma}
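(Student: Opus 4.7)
The plan is to eliminate the leading singularity by a gauge transformation, apply the Frobenius method to obtain a power-series solution to the resulting regular problem, and then deduce analyticity in $\beta$ by appealing to the matrix-product representation from Lemma~\ref{L:A prod asymp} together with Vitali's theorem.

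First I would set $\Psi(t) = t^{2r(R-r)/\beta} u(t)$. A direct computation shows $u$ satisfies
\begin{equation*}
u'(t) = \bigl(\tfrac{M}{t} + V\bigr) u(t), \qquad M := \tfrac{2}{\beta}\bigl(\Delta - r(R-r)\bigr),
\end{equation*}
and the asymptotic condition becomes $u(t) = \one_R^r + O(t)$ as $t\downarrow 0$. By part~\eqref{I:Delta} of Proposition~\ref{P:all about calA}, for $\beta>0$ the operator $M$ is self-adjoint and negative semi-definite, with one-dimensional kernel spanned by $\one_R^r$.

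Next I would seek $u(t)=\sum_{n\geq 0} t^n u_n$. Substituting into the equation gives $u_0=\one_R^r$ and the recursion $(n-M)u_n = V u_{n-1}$ for $n\geq 1$. Because $\operatorname{spec}(M)\subset(-\infty,0]$ when $\beta>0$, the operator $n-M$ is invertible with $\|(n-M)^{-1}\|\leq 1/n$, yielding the crude bound $\|u_n\|\leq\|V\|^n/n!$. Hence the series converges absolutely and uniformly on compact subsets of $\C\times\C^R$ in the variables $(t,\vec{x})$, furnishing a function entire in both. This establishes existence, $t$-regularity, and entire dependence on $\vec{x}$ (since each $V$ is linear in $\vec{x}$, making each $u_n$ a polynomial in $\vec{x}$). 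Uniqueness follows from Frobenius theory applied to the $u$-equation: any $C^1$ solution of the homogeneous equation with $O(t)$ behaviour at the origin must vanish, since for $\beta>0$ every indicial exponent (= eigenvalue of $M$) is $\leq 0 <1$, so decomposing in a Jordan basis for $M$ forces each component to be identically zero.

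The genuinely delicate point is the holomorphic continuation in $\beta$ to $\C\setminus(-2r(R-r),0)$, and this is where I would expect the real work to lie. The coefficients $u_n(\beta)=(n-M(\beta))^{-1}V\cdots(1-M(\beta))^{-1}V\one_R^r$ are merely rational in $\beta$, with putative poles at the resonance set $\{2(\lambda_j-r(R-r))/n\}$ (for $\lambda_j$ a non-principal eigenvalue of $\Delta$ and $n\geq 1$), which is discrete in $\R_{<0}$ but accumulates at $0$. Rather than try to prove these resonance singularities cancel inside the series, I would bypass the series entirely by exploiting Lemma~\ref{L:A prod asymp}. That lemma identifies $\Psi(1,\vec{x})$ as the limit of the rescaled products
\begin{equation*}
\Psi_m := m^{-2r(R-r)/\beta}\tbinom{R}{r}\calA_{m-1}(\vec{x}/m)\cdots\calA_0(\vec{x}/m)\,\one_R^r.
\end{equation*}
Each $\calA_k(\vec{0})$ has entries given by expectations from Lemma~\ref{L:blah}, which are ratios of gamma functions in $\beta$; combined with the prefactor $m^{-2r(R-r)/\beta}$ and \eqref{E:calA:x neq 0}, this makes $\Psi_m$ meromorphic in $\beta$ with known pole structure. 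Re-examining the proofs of Lemmas~\ref{L:prod asymp} and \ref{L:A prod asymp}, one sees that the asymptotic $n^{b-a}\Gamma(n+b)/\Gamma(n+a)=1+O(1/n)$ holds uniformly on compact subsets of $\C\setminus(-2r(R-r),0)$ in $\beta$ (this is precisely the region excluding the arguments that make the gamma functions blow up at integer arguments of the form $-2p/\beta$ relevant to \eqref{E:Cdefn}). Hence the convergence $\Psi_m\to\Psi$ is uniform in $\beta$ on compact subsets of $\C\setminus(-2r(R-r),0)$, and Vitali's theorem gives the required holomorphic continuation. Finally, $\Psi(t,\vec{x})$ for general $t>0$ is recovered by rescaling $\vec{x}\mapsto t\vec{x}$ (which multiplies $V$ by $t$ and leaves the leading power the same up to the factor $t^{2r(R-r)/\beta}$); extension to complex $t\neq 0$ follows from the series.
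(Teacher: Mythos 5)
Your existence, $t$-regularity, and $\vec{x}$-analyticity argument is essentially the paper's. The gauge $\Psi=t^{2r(R-r)/\beta}u$ and the recursion $(n-M)u_n = Vu_{n-1}$ with $M=\tfrac2\beta(\Delta-r(R-r))$ reproduce exactly the paper's construction of the vectors $\Psi_k$, and your bound $\|(n-M)^{-1}\|\le 1/n$ is the correct use of the semidefiniteness from Proposition~\ref{P:all about calA}. For uniqueness, the paper instead passes to the integral equation obtained by variation of parameters, namely
\begin{equation*}
\Phi(t) = \one_R^r + \int_0^t \bigl(\tfrac st\bigr)^{2[r(R-r)-\Delta]/\beta}\,V\Phi(s)\,ds,
\end{equation*}
and runs a Picard iteration. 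Your appeal to Frobenius theory reaches the same conclusion but is less airtight: it does not address what happens when eigenvalues of $M$ differ by positive integers (where the fundamental system acquires logarithmic factors), whereas the integral-equation route sidesteps this cleanly and also furnishes a second proof of existence.

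The genuine gap is in your treatment of $\beta$-analyticity, which you correctly flag as the delicate point, but the route you propose does not work as stated. Lemma~\ref{L:A prod asymp} does not identify $\Psi(1,\vec x)$ as a limit of partial products: it controls $\calA_{m-1}(\vec x/n)\cdots\calA_0(\vec x/n)\one_R^r$ with two \emph{separate} indices $m\le n$, and its error term $O(m/n+1/m)$ becomes $O(1)$ if you collapse $m=n$ as in your $\Psi_m$. The identification of $\Psi(1,\vec x)$ with a matrix-product limit is Proposition~\ref{P:v Answer}, which is proved \emph{after} and \emph{using} the present lemma, so invoking it here is circular. More fundamentally, the operator-norm bound \eqref{E:calA:prod norm} at the heart of the product analysis is obtained from spectral estimates for the \emph{self-adjoint} matrices $\calA_k(\vec 0)$ together with Perron--Frobenius positivity to pin down their top eigenvalue; for complex $\beta$ these matrices are no longer self-adjoint, their entries need not be nonnegative, and none of those steps survive. ``Re-examining the proofs'' will not deliver uniform convergence on compacta of $\C\setminus(-2r(R-r),0)$; genuinely new estimates would be required. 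Your observation that the coefficients $u_n(\beta)$ are rational in $\beta$ with a pole set accumulating at $0$ is a legitimate concern, and a direct analysis showing that these apparent poles either lie in the excluded interval or cancel in the sum is the natural missing piece.
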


\begin{proof}
Existence and analyticity are not difficult. Indeed, for integers $k\geq0$ let us define vectors $\Psi_k$ by
$$
\Psi_{k+1} = \bigl(k+1+\tfrac2\beta[r(R-r)-\Delta]\bigr)^{-1} V \Psi_k \qtq{with} \Psi_0 = \one_R^r.
$$
Then the infinite series
$$
    t^{2r(R-r)/\beta} \sum_{k=0}^\infty \Psi_k t^k
$$
converges and gives a solution to \eqref{E:ODE} with the correct behaviour for small $t$.  Indeed, this follows from two
facts proved in Proposition~\ref{P:all about calA}: $r(R-r)-\Delta$ is positive semi-definite and $\Delta
\one_R^r=r(R-r)\one_R^r$.

This leaves us to address the question of uniqueness.  Let $\Phi(t)=t^{-2r(R-r)/\beta}\Psi(t)$, then by variation
of parameters,
\begin{equation}\label{E:prePicard}
\Phi(t) = \bigl(\tfrac{t_0}{t}\bigr)^{2[r(R-r)-\Delta]/\beta} \Phi(t_0) + \int_{t_0}^t \bigl(\tfrac{s}{t}\bigr)^{2[r(R-r)-\Delta]/\beta} V \Phi(s)\,ds
\end{equation}
for any $0<t_0<t$.  Thus sending $t_0\downarrow 0$ we deduce that
$$
\Phi(t) = \one_R^r + \int_0^t \bigl(\tfrac{s}{t}\bigr)^{2[r(R-r)-\Delta]/\beta} V \Phi(s)\,ds
$$
and so uniqueness (as well as another proof of existence) follow by the usual Picard iteration argument.
\end{proof}

\begin{remark}
Our system of ODEs \eqref{E:ODE} has a scaling symmetry and hence so does the solution.  More precisely,
$$
\Psi(\lambda t,\lambda^{-1} \vec{x}) = \lambda^{2r(R-r)/\beta} \Psi(t, \vec{x}) \qtq{for any} \lambda>0.
$$
Later we will see that only the value of $\Psi(t=1,\vec{x})$ is needed to determine the autocorrelation with parameters $\vec x$;
nevertheless this symmetry shows the additional information in $\Psi(t=1,\vec{x})$ is not wholly redundant.
\end{remark}

\begin{lemma}  For $0<m\leq n-2$,
\begin{equation}\label{E:prod as ODE}
\calA_{n-2}(\vec{x}/n) \cdots \calA_{m}(\vec{x}/n) \Psi(\tfrac mn,\vec{x}) = \Psi(1,\vec{x})
    + O\bigl( \tfrac1m \bigl(\tfrac{n}{m}\bigr)^{2r(R-r)/\beta}\bigr).
\end{equation}
\end{lemma}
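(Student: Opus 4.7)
The plan is to view the matrix product as a forward-Euler-type discretization of the ODE in Lemma~\ref{L:ODE} with step size $1/n$, and to control the propagation error via Proposition~\ref{P:all about calA}\,(v). The expansion
\begin{equation*}
\calA_k(\vec x/n) = \Id + \tfrac{2}{\beta(k+1)+1}\Delta + \tfrac{1}{n}V + O\bigl((k+1)^{-2}\bigr)
\end{equation*}
from Proposition~\ref{P:all about calA}\,(iii) matches, up to $O((k+1)^{-2})$, the Taylor approximation to the ODE flow over $[k/n,(k+1)/n]$: writing $\Psi((k+1)/n,\vec x)=\Psi(k/n,\vec x)+n^{-1}\Psi'(k/n,\vec x)+O(n^{-2})\|\Psi''\|$ and using the ODE $\Psi'(k/n,\vec x)=(\tfrac{2n}{\beta k}\Delta+V)\Psi(k/n,\vec x)$, the linear pieces differ by $(\tfrac{2}{\beta(k+1)+1}-\tfrac{2}{\beta k})\Delta=O(k^{-2})$.

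Formally, set $\tilde\Psi_m:=\Psi(m/n,\vec x)$ and $\tilde\Psi_{k+1}:=\calA_k(\vec x/n)\,\tilde\Psi_k$, so that $\tilde\Psi_{n-1}$ is the left-hand side of \eqref{E:prod as ODE}. With $e_k:=\tilde\Psi_k-\Psi(k/n,\vec x)$ and local truncation error
\begin{equation*}
\delta_k:=\calA_k(\vec x/n)\,\Psi\bigl(\tfrac{k}{n},\vec x\bigr)-\Psi\bigl(\tfrac{k+1}{n},\vec x\bigr),
\end{equation*}
the recursion $e_{k+1}=\calA_k(\vec x/n)\,e_k+\delta_k$ with $e_m=0$ unrolls to
\begin{equation*}
e_{n-1}=\sum_{k=m}^{n-2}\calA_{n-2}(\vec x/n)\cdots\calA_{k+1}(\vec x/n)\,\delta_k.
\end{equation*}
Applying Proposition~\ref{P:all about calA}\,(v) to bound each amplification factor crudely by $O\bigl((n/m)^{2r(R-r)/\beta}\bigr)$, provided one has $\|\delta_k\|=O((k+1)^{-2})$ locally uniformly in $\vec x$, the sum $\sum_{k\geq m}(k+1)^{-2}=O(1/m)$ yields the claimed bound. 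A final comparison $\|\Psi((n-1)/n,\vec x)-\Psi(1,\vec x)\|=O(1/n)$, valid since $\Psi$ is $C^1$ with bounded derivative on $[1/2,1]$, is absorbed.

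The main obstacle is the a priori bound $\|\delta_k\|_{\ell^2(\Grr)}=O((k+1)^{-2})$, because controlling the Taylor remainder forces one to estimate $\|\Psi''(t)\|$ for $t\in[m/n,1]$, and this may degenerate as $t\downarrow 0$. The remedy is the factorization $\Phi(t):=t^{-2r(R-r)/\beta}\Psi(t,\vec x)$ already used in the proof of Lemma~\ref{L:ODE}: the Picard iteration there shows $\Phi$ extends analytically across $t=0$, so that $\|\Psi^{(j)}(t,\vec x)\|$ is controlled by $t^{2r(R-r)/\beta-j}$ for $j=0,1,2$. Feeding these bounds into the comparison of the two expansions and collecting the two error sources---of orders $(k+1)^{-2}\|\Psi(k/n,\vec x)\|$ and $n^{-2}\|\Psi''(\xi)\|$---yields the required estimate and completes the proof.
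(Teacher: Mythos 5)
Your proposal is correct and follows essentially the same route as the paper: interpret the matrix product as a discretization of the ODE, establish the local truncation error $\Psi(\tfrac{k+1}{n},\vec x)=\calA_k(\vec x/n)\Psi(\tfrac{k}{n},\vec x)+O(k^{-2})$ (the paper derives this from \eqref{E:calA:asymptotic} and the variation-of-parameters formula \eqref{E:prePicard}, which is the same device you invoke via the analytic extension $\Phi(t)=t^{-2r(R-r)/\beta}\Psi(t)$ to control the singularity of $\Psi''$ near $t=0$), telescope, and sum against the amplification bound \eqref{E:calA:prod norm}. The only cosmetic difference is that the paper keeps the $k$-dependent amplification $(n/(k+1))^{2r(R-r)/\beta}$ and sums $\sum_{k\geq m} k^{-2}(n/(k+1))^{2r(R-r)/\beta}\leq 2m^{-1}(n/m)^{2r(R-r)/\beta}$, while you bound all amplification factors uniformly by $(n/m)^{2r(R-r)/\beta}$; both give the stated estimate.
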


\begin{proof}
From \eqref{E:calA:asymptotic} and \eqref{E:prePicard},
$$
\Psi(\tfrac{k+1}n,\vec{x}) = \calA_{k}(\vec{x}/n) \Psi(\tfrac{k}n,\vec{x}) + O(k^{-2}).
$$
The result now follows easily from this, \eqref{E:calA:prod norm}, and
$$
\sum_{k=m}^{n-2} k^{-2} \bigl(\tfrac{n}{k+1}\bigr)^{2r(R-r)/\beta} \leq 2 m^{-1} \bigl(\tfrac{n}{m}\bigr)^{2r(R-r)/\beta} ,
$$
which requires only elementary manipulations.
\end{proof}

Putting everything together yields

\begin{proposition}\label{P:v Answer}
For $\vi\in \Grr$,
$$
\lim_{n\to\infty} n^{-2r(R-r)/\beta} \tbinom Rr \E\{v_{n-1}(x_1/n)\otimes\cdots\otimes v_{n-1}(x_R/n)\}_\vi = C \Psi(t=1,\vec{x})_\vi \,.
$$
Here $C$ is as in \eqref{E:Cdefn}.
\end{proposition}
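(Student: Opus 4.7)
The plan is to chain Lemma~\ref{L:A prod asymp} and the estimate \eqref{E:prod as ODE}, using the initial condition $t^{-2r(R-r)/\beta}\Psi(t)=\one_R^r+O(t)$ as the bridge between them. As a preliminary, iterating \eqref{E:v recurr1} and using the independence of $\alpha_k$ from everything preceding it gives
\[
\E\{v_{n-1}(x_1/n)\otimes\cdots\otimes v_{n-1}(x_R/n)\}
    = \calA_{n-2}(\vec{x}/n)\cdots\calA_0(\vec{x}/n)\,(v_0\otimes\cdots\otimes v_0).
\]
Since $v_0=[1,1]^T$, the tensor product $v_0\otimes\cdots\otimes v_0$ is the vector of ones on $G_R$, and its restriction to the invariant subspace $\ell^2(\Grr)$ is exactly $\one_R^r$. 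So the task is to understand $\calA_{n-2}(\vec{x}/n)\cdots\calA_0(\vec{x}/n)\one_R^r$ as $n\to\infty$.

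Next I pick an intermediate index $m=m(n)$ with $m\to\infty$ and $m/n\to 0$, and split the product at $m$. Lemma~\ref{L:A prod asymp} handles the initial segment, giving
\[
\tbinom{R}{r}\calA_{m-1}(\vec{x}/n)\cdots\calA_0(\vec{x}/n)\one_R^r
 = C\,m^{2r(R-r)/\beta}\one_R^r + O\bigl(m^{2r(R-r)/\beta}(\tfrac{m}{n}+\tfrac{1}{m})\bigr).
\]
I then use the initial condition from Lemma~\ref{L:ODE} at $t=m/n$ (which is small for $n$ large) to replace the explicit $\one_R^r$ factor:
\[
m^{2r(R-r)/\beta}\one_R^r
 = n^{2r(R-r)/\beta}\Psi(m/n,\vec{x}) + O\bigl(n^{2r(R-r)/\beta}(m/n)^{2r(R-r)/\beta+1}\bigr).
\]

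For the tail, I apply $\calA_{n-2}(\vec{x}/n)\cdots\calA_m(\vec{x}/n)$ to both sides. By \eqref{E:prod as ODE} the image of the main term $n^{2r(R-r)/\beta}\Psi(m/n,\vec{x})$ is $n^{2r(R-r)/\beta}\Psi(1,\vec{x})$ up to an error of $O\bigl(n^{2r(R-r)/\beta}m^{-1}(n/m)^{2r(R-r)/\beta}\bigr)$, while by \eqref{E:calA:prod norm} the operator norm of the tail product is $O\bigl((n/m)^{2r(R-r)/\beta}\bigr)$, which propagates the two earlier error terms into contributions of sizes $O(n^{2r(R-r)/\beta}(m/n+1/m))$ and $O(n^{2r(R-r)/\beta}\cdot m/n)$ respectively. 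Dividing through by $n^{2r(R-r)/\beta}$, the claim follows provided all three error exponents—$m/n$, $1/m$, and $m^{-1}(n/m)^{2r(R-r)/\beta}$—tend to zero.

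The main obstacle is the bridging choice of $m$: it must grow fast enough that $m^{-1}(n/m)^{2r(R-r)/\beta}\to 0$, i.e.\ $m\gg n^{2r(R-r)/(\beta+2r(R-r))}$, while also satisfying $m=o(n)$ and $m\to\infty$. The exponent $2r(R-r)/(\beta+2r(R-r))$ is strictly less than one for every $\beta\in(0,\infty)$, so any $m$ of the form $n^{\theta}$ with $\theta$ in the nonempty open interval $\bigl(2r(R-r)/(\beta+2r(R-r)),\,1\bigr)$ works, and the proof is completed by selecting such an $m$.
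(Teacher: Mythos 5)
Your proof is correct and follows essentially the same route as the paper: reduce to the matrix product $\calA_{n-2}\cdots\calA_0$ acting on $\one_R^r$, split at an intermediate index $m$, bridge the two halves via the initial condition of Lemma~\ref{L:ODE} at $t=m/n$, and track the three error terms, which all vanish for $m\sim n^\theta$ with $\theta$ in the nonempty interval $\bigl(2r(R-r)/(\beta+2r(R-r)),1\bigr)$. Your explicit interval for $\theta$ in fact contains the paper's particular choice $\theta=1-\tfrac12\beta/[\beta+r(R-r)]$, so the two arguments coincide.
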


\begin{proof}
From the small $t$ behaviour of $\Psi(t)$ we have
$$
m^{2r(R-r)/\beta} \one_R^r = n^{2r(R-r)/\beta} \Psi(\tfrac mn,\vec{x}) + O\bigl(\tfrac mn m^{2r(R-r)/\beta}\bigr).
$$
Combining this with Lemma~\ref{L:A prod asymp} yields
\begin{align*}
\tbinom{R}{r} \calA_{m-1}(\vec{x}/n) \cdots \calA_{1}(\vec{x}/n) & \calA_{0}(\vec{x}/n) \one_R^r\\
    &= C \Psi(\tfrac mn,\vec{x}) n^{2r(R-r)/\beta} + O\bigl([\tfrac mn + \tfrac1m] m^{2r(R-r)/\beta}\bigr).
\end{align*}
Next, we employ \eqref{E:prod as ODE} and \eqref{E:calA:prod norm} to obtain
\begin{align*}
n^{- 2r(R-r)/\beta} \tbinom{R}{r} & \calA_{n-2}(\vec{x}/n) \cdots \calA_{1}(\vec{x}/n) \calA_{0}(\vec{x}/n) \one_R^r \\
    & = C \Psi(1,\vec{x}) + O\bigl( \tfrac1m  (\tfrac{n}{m})^{2r(R-r)/\beta}\bigr) + O(\tfrac mn + \tfrac1m).
\end{align*}
Choosing $m$ comparable to $n^{1-\delta}$ with $\delta=\tfrac12\beta/[\beta+r(R-r)]$ we see that the error terms can be
made negligible in the $n\to\infty$ limit.  Thus the proposition follows.
\end{proof}

%%%%%%%%%%%%%%%%%%%%%%%%%%%%%%%%%%%%%%%%%%%%%%%%%%%
%
%
%                                   Section
%
%
%%%%%%%%%%%%%%%%%%%%%%%%%%%%%%%%%%%%%%%%%%%%%%%%%%%

\section{Main Theorem and Applications}
\label{S:examples}

\begin{theorem}\label{T:main}
Given $\vec{w}\in\C^{R-r}$ and $\vec{y}\in\C^{r}$, let $\vec{x}$ be the vector formed by concatenating $w_j$ and $\bar
y_k$, as in \eqref{E:concat}, and let $\Psi$ be the (unique) solution of the initial value problem
\begin{equation}\label{E:ODE again}
\frac{d\ }{dt} \Psi(t,\vec{x}) = \bigl( \tfrac{2}{\beta t} \Delta + V \bigr) \Psi(t,\vec{x})
\qtq{and} t^{-2r(R-r)/\beta} \Psi(t) =  \one_R^r + O(t).
\end{equation}
Then
\begin{equation}\label{E:Answer}
\begin{aligned}
\lim_{n\to\infty} n^{-2r(R-r)/\beta} & \E^\beta_n\biggl\{ \prod_{j=1}^{R-r} Z_n(e^{iw_j/n}) \prod_{k=1}^r \overline{Z_n(e^{iy_k/n})} \biggr\} \\
   & = C \cdot \exp\Bigl\{\tfrac{i}2 \sum \bar y_k - \tfrac{i}2 \sum w_j \Bigr\}  \frac{\langle \one_R^r,\ \Psi(t=1,\vec{x} ) \rangle}{\|\chi_r^R\|^2}.
\end{aligned}
\end{equation}
Here $C$ is as in \eqref{E:Cdefn}.  Note that since $\|\chi_r^R\|^2 =\binom{R}{r}$, the limit takes the value $C$ when
$\vec{w}=0$ and $\vec{y}=0$.
\end{theorem}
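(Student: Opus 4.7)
The plan is to combine the reduction formula \eqref{E:Z^R} with Proposition~\ref{P:v Answer}, by expanding products of $Z_n$'s as linear combinations of expected tensor products of $v_{n-1}$'s. By \eqref{E:Z^R}, it suffices to compute the $n\to\infty$ limit of $n^{-2r(R-r)/\beta} \E^\beta_n\bigl\{ e^{ir\eta} \prod_{p=1}^R Z_n(e^{ix_p/n}) \bigr\}$; the outside prefactor $(-1)^r \prod_k e^{i\bar y_k}$ from \eqref{E:Z^R} will be reassembled at the end.

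Substituting \eqref{E:Z from v}, I would write
$$Z_n(e^{ix_p/n}) = e^{-ix_p/2}\bigl[ e^{ix_p/(2n)} v_{n-1,2}(x_p/n) - e^{-i\eta} e^{-ix_p/(2n)} v_{n-1,1}(x_p/n)\bigr]$$
and expand the product over $p$ as a sum over indices $\vi=(i_1,\ldots,i_R)\in G_R$. Each term carries $v_{n-1,i_1}(x_1/n)\otimes\cdots\otimes v_{n-1,i_R}(x_R/n)$ with a scalar prefactor $(-1)^{\#\{p:i_p=1\}}\, e^{-i\eta\,\#\{p:i_p=1\}}\, e^{-\frac{i}{2}\sum_p x_p}$ up to factors of the form $e^{\pm ix_p/(2n)} = 1 + O(1/n)$. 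Because $\eta$ is independent of the $\alpha_k$ and uniform on $[0,2\pi)$, the $\eta$-integration of $e^{ir\eta}$ against $e^{-i\eta\,\#\{p:i_p=1\}}$ kills every term except those with $\vi\in \Grr$, on which the surviving sign is $(-1)^r$.

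Multiplying by $n^{-2r(R-r)/\beta}$ and applying Proposition~\ref{P:v Answer} to each surviving $\vi\in\Grr$ converts the sum into
$$C\tbinom{R}{r}^{-1}\sum_{\vi\in\Grr}\Psi(t=1,\vec x)_\vi = \frac{C\,\langle \one_R^r,\,\Psi(t=1,\vec x)\rangle}{\|\one_R^r\|^2}.$$
Combining this with the outstanding scalar $(-1)^r e^{-\frac{i}{2}\sum_p x_p}$ and the reduction prefactor $(-1)^r\prod_k e^{i\bar y_k}$ from \eqref{E:Z^R}, the two signs $(-1)^r$ cancel; using $\sum_p x_p = \sum_j w_j + \sum_k \bar y_k$, the accumulated phases collapse to $\exp\bigl\{\tfrac{i}{2}\sum \bar y_k - \tfrac{i}{2}\sum w_j\bigr\}$, which is exactly \eqref{E:Answer}.

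The only nontrivial point is the careful bookkeeping of phases and signs; no genuine analytic obstacle remains, since all of the substantial asymptotic work has been packaged into Proposition~\ref{P:v Answer}. In particular, the $1+O(1/n)$ corrections from $e^{\pm ix_p/(2n)}$ multiply a main term whose $n\to\infty$ limit exists and is finite (again by Proposition~\ref{P:v Answer}), so they do not disturb the limit, and the orthogonality relation for the uniform random phase $\eta$ enforces precisely the $\Grr$ restriction that matches the target formula.
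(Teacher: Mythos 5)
Your argument is correct and is precisely the calculation the paper leaves implicit when it says the theorem ``is an immediate consequence of Proposition~\ref{P:v Answer}, \eqref{E:Z from v}, and \eqref{E:Z^R}.''  You have simply carried out the expansion, the $\eta$-orthogonality that selects $\Grr$, and the phase bookkeeping explicitly; no new ideas are involved, and nothing is missing.
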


\begin{proof}
Existence and uniqueness of solutions to \eqref{E:ODE again} was proved in Lemma~\ref{L:ODE}.  The formula \eqref{E:Answer} is
an immediate consequence of Proposition~\ref{P:v Answer}, \eqref{E:Z from v}, and \eqref{E:Z^R}.
\end{proof}

\begin{corollary}\label{C:Analytic}
LHS\eqref{E:Answer} is an analytic function of all arguments, more precisely, of each $w_j$, each $\bar y_k$, and of
$\beta$.
\end{corollary}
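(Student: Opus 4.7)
The plan is to invoke Theorem~\ref{T:main} to replace the limit on the left-hand side of \eqref{E:Answer} by the explicit closed-form expression on the right, and then to verify that each factor appearing there is analytic in the relevant variables. Since the theorem identifies the LHS with the RHS as functions of $(\vec{w},\vec{y},\beta)$, analyticity of the former reduces to analyticity of the latter, with no further estimates on the limiting procedure needed.

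For the individual factors on the right of \eqref{E:Answer}: the exponential prefactor $\exp\bigl\{\tfrac{i}{2}\sum \bar y_k - \tfrac{i}{2}\sum w_j\bigr\}$ is manifestly entire in each $w_j$ and each $\bar y_k$ and has no $\beta$-dependence, and the normalization $\|\one_R^r\|^2=\binom{R}{r}$ is a positive integer. The constant $C(r,R,\beta)$ of \eqref{E:Cdefn} is a finite product of ratios $\Gamma(\tfrac{2}{\beta}p)/\Gamma(\tfrac{2}{\beta}(R-r+p))$ with $1\leq p\leq r$; since $1/\Gamma$ is entire and $\Gamma$ has poles only at the non-positive integers, each such ratio is holomorphic in $\beta$ on the half-plane $\{\Re\beta>0\}$. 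Finally, Lemma~\ref{L:ODE} already records that $\Psi(t=1,\vec x)$ is entire in $\vec x \in \C^R$ and holomorphic in $\beta$ off the interval $(-2r(R-r),0)$; pairing with the fixed vector $\one_R^r$ preserves these properties, and substituting $\vec x = (w_1,\dots,w_{R-r},\bar y_1,\dots,\bar y_r)$ turns ``entire in $\vec x$'' into ``entire in each $w_j$ and each $\bar y_k$'' as required.

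The only delicate point — more a bookkeeping matter than a real obstacle — is to check that the $\beta$-domains of the two non-trivial factors meet in a common open set containing the physically relevant ray $(0,\infty)$; the half-plane $\{\Re\beta>0\}$ is contained in both, and so the product is analytic there. If one wishes to go beyond separate analyticity in each of $w_j$, $\bar y_k$, and $\beta$, Hartogs' theorem promotes this to joint analyticity in $(\vec w,\bar{\vec y},\beta)$ for free.
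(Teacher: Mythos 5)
The paper states this corollary without a written proof, treating it as an immediate consequence of Theorem~\ref{T:main} together with the analyticity statements already recorded in Lemma~\ref{L:ODE}; your proof supplies exactly the details that are implicit there, and the decomposition into prefactor, constant $C$, and $\langle\one_R^r,\Psi\rangle$ is the intended one. Your argument is correct and follows essentially the same route as the paper.
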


Analyticity in $\beta$ is the most interesting element of this corollary.  In statistical physics, failure of analyticity of quantities
in the thermodynamic limit is the signal of a phase transition.  Whether or not the (thermodynamic limit of the) point processes described
in \eqref{CGbeta} exhibit a phase transition is currently unresolved.  We believe that there is no phase transition and Corollary~\ref{C:Analytic}
supports this contention; however, (local) behaviour of the point process is more properly determined by the Laplace functionals (and
their analyticity), rather than autocorrelations of the characteristic polynomial.

\begin{corollary}\label{C:Answer 2}  Given $w,y\in\C$,
\begin{equation*}
\lim_{n\to\infty} n^{-2/\beta} \E^\beta_n\bigl\{ Z_n(e^{iw/n}) \overline{Z_n(e^{iy/n})} \bigr\}
   = \sqrt{\pi}\,e^{-i(w-\bar y)/2} (w-\bar y)^{\frac12 - \frac2\beta}
        J_{\frac2\beta-\frac12}\bigl(\tfrac{w-\bar y}2\bigr)
\end{equation*}
where $J_\nu$ denotes the Bessel function of the first kind and order $\nu$.
\end{corollary}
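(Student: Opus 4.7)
The plan is to apply Theorem~\ref{T:main} with $R=2$, $r=1$, for which $\Grr$ consists of only the two vertices $(1,2)$ and $(2,1)$. In this basis $\Delta = \bigl(\begin{smallmatrix} 0 & 1 \\ 1 & 0 \end{smallmatrix}\bigr)$, and with $\xi := (w-\bar y)/2$ the potential from \eqref{E:V defn} reduces to the diagonal matrix $V = \diag(-i\xi, i\xi)$. The constant $C$ from \eqref{E:Cdefn} is $\Gamma(2/\beta)/\Gamma(4/\beta)$, and since $\|\chi_1^2\|^2 = 2$, the entire problem collapses to the computation of $\langle \one_R^r, \Psi(1,\vec{x})\rangle$ for the solution of \eqref{E:ODE again}.

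Write $\Psi = (\psi_1,\psi_2)^T$, set $\nu := 2/\beta$, and introduce $f := \psi_1 + \psi_2$, $g := \psi_2 - \psi_1$. The $2\times 2$ system decouples as $f' = (\nu/t) f + i\xi g$ and $g' = -(\nu/t) g + i\xi f$; eliminating $g$ yields the scalar equation
\[
f''(t) - \frac{\nu(\nu-1)}{t^2}\, f(t) + \xi^2 f(t) = 0,
\]
a standard Bessel-type equation whose solution regular at the origin is a multiple of $\sqrt{\xi t}\, J_{\nu-1/2}(\xi t)$ (via the substitution $\tau = \xi t$, which reduces it to $h'' + h - [(\nu-1/2)^2 - 1/4]\tau^{-2} h = 0$). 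The initial condition $t^{-\nu}\Psi(t) \to \one_R^r$ forces $f(t) \sim 2 t^\nu$ as $t \downarrow 0$, which together with the small-argument asymptotic $\sqrt{s}\, J_{\nu-1/2}(s) \sim s^\nu/[2^{\nu-1/2}\Gamma(\nu+1/2)]$ pins down the constant, producing
\[
f(1) = 2^{\nu+1/2}\, \Gamma(\nu+1/2)\, \xi^{1/2-\nu}\, J_{\nu-1/2}(\xi).
\]
A brief Frobenius check confirms that $g$ is then automatically $O(t^{\nu+1})$, consistent with the $O(t)$ remainder in the initial condition.

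It remains to assemble the pieces. Legendre's duplication formula gives $C = \sqrt{\pi}/[2^{2\nu-1}\Gamma(\nu+1/2)]$, and the exponential prefactor in \eqref{E:Answer} simplifies to $\exp\{\tfrac{i}{2}(\bar y - w)\} = e^{-i\xi}$. Substituting $\langle\one_R^r, \Psi(1,\vec{x})\rangle = f(1)$, dividing by $\|\chi_1^2\|^2 = 2$, and using $2\xi = w - \bar y$ yields exactly $\sqrt{\pi}\, e^{-i(w-\bar y)/2}(w - \bar y)^{1/2 - 2/\beta} J_{2/\beta-1/2}\bigl((w-\bar y)/2\bigr)$, matching the claim. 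The only real obstacle is careful bookkeeping of signs and normalization constants; there is no analytic subtlety, since $s^{-\nu} J_\nu(s)$ is entire and the formula is well defined even in the degenerate case $w = \bar y$.
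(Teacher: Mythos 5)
Your proposal is correct and follows the same overall route as the paper: specialize Theorem~\ref{T:main} to $R=2$, $r=1$, recognize the $2\times 2$ ODE system \eqref{E:ODE again} as a Bessel problem, pin down the normalization via the small-$t$ behaviour, and finish with Legendre duplication. Where you deviate from the paper is in how the Bessel structure is extracted: the paper simply exhibits the full two-component solution $\Psi(t)$ (a vector involving both $J_{\nu-1/2}$ and $J_{\nu+1/2}$, with $\nu=2/\beta$) and verifies it via the standard recurrences $J_\nu'=\tfrac{\nu}{t}J_\nu - J_{\nu+1} = -\tfrac{\nu}{t}J_\nu + J_{\nu-1}$, while you exploit the $1\leftrightarrow 2$ symmetry to decouple the system into $f=\psi_1+\psi_2$ and $g=\psi_2-\psi_1$, and observe that the final formula only involves $f(1)=\langle\one_R^r,\Psi(1)\rangle$, so you never need the second component at all. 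Your route is a genuine (small) improvement in economy: it derives rather than guesses the Bessel solution, and it makes clear that the $J_{\nu+1/2}$ contribution in the paper's displayed $\Psi$ is entirely parasitic for the answer. (Incidentally, checking signs against the ODE you wrote, your decomposition gives $\psi_1 \propto J_{\nu-1/2}-iJ_{\nu+1/2}$, $\psi_2 \propto J_{\nu-1/2}+iJ_{\nu+1/2}$, which is the transpose of the paper's displayed column vector; since the statement only uses the sum $f=\psi_1+\psi_2$, this does not affect the result, but your ordering is the one consistent with the displayed system.) The only minor gap is that the phrase ``a brief Frobenius check confirms that $g$ is automatically $O(t^{\nu+1})$'' ought to be replaced by the one-line computation $g=(i\xi)^{-1}(f'-\tfrac{\nu}{t}f)=iK\sqrt{\xi t}\,J_{\nu+1/2}(\xi t)$, which immediately gives both the required $O(t^{\nu+1})$ bound and the second component of $\Psi$; but this is a cosmetic point, not a hole in the argument.
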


\begin{proof}
Choosing $(1,2),(2,1)$ as our ordering of the entries in $G_2^1$, the initial value problem \eqref{E:ODE again} reads
$$
\frac{d\ }{dt} \Psi(t) = \begin{bmatrix} \frac{i}2(\bar y -w) & \frac{2}{\beta t} \\ \frac{2}{\beta t} & \frac{i}2(w -\bar y) \end{bmatrix} \Psi(t)
    \qtq{and} \Psi(t) = t^{2/\beta} \begin{bmatrix} 1 + O(t) \\ 1+O(t) \end{bmatrix}.
$$
This has solution
$$
\Psi(t) = 2^{\frac4\beta-1} (w-\bar y)^{\frac12 - \frac2\beta} \Gamma(\tfrac2\beta+\tfrac12) t^{1/2}
    \begin{bmatrix} J_{\frac2\beta-\frac12}(\tfrac12[w-\bar y]t) + i J_{\frac2\beta+\frac12}(\tfrac12[w-\bar y]t) \\[1ex]
        J_{\frac2\beta-\frac12}(\tfrac12[w-\bar y]t) - i J_{\frac2\beta+\frac12}(\tfrac12[w-\bar y]t) \end{bmatrix}
$$
where $J_\nu$ denotes the Bessel function of the first kind.  Verifying that this is the desired solution requires only
a few of its basic properties:
\begin{align*}
J_\nu' = \tfrac{\nu}t J_\nu - J_{\nu+1} =  - \tfrac{\nu}t J_\nu + J_{\nu-1}
\quad\text{and}\quad
J_\nu (z) = \tfrac{1}{\Gamma(\nu+1)} \bigl( \tfrac{z}{2} \bigr)^\nu + O(z^{\nu+1}),
\end{align*}
Proofs of these can be found in \cite[\S\S4.5--6]{AndrewsAskeyRoy}.  Computing the value of $C$ via \eqref{E:Cdefn} and
then employing Legendre's duplication formula for the Gamma function (cf. \cite[Theorem~1.5.1]{AndrewsAskeyRoy}), we
obtain
$$
2^{\frac4\beta-1} \Gamma(\tfrac2\beta+\tfrac12) C = \sqrt{\pi}\,.
$$
The result now follows with a few more elementary manipulations.
\end{proof}

We have made some (computer assisted) investigations of higher values of $R$, resulting in rather long formulae involving
exotic functions (specifically, Whittaker and ${}_1F_2$ hypergeometric functions).  At present, it is not clear
to us that such results carry more or clearer information than the ODEs that generated them.

The moments of the characteristic polynomial at a single point on the circle have been known for a long time and can be
deduced from the Selberg integral (cf. \cite{ConreyFKRSnaith}):
\begin{align}
\E\Bigl\{ |Z_n(z)|^{2\lambda} \Bigr\} &=
    \prod_{l=0}^{n-1} \frac{ \Gamma(2\lambda+\tfrac\beta2l+1)\,\Gamma(\tfrac\beta2l +1) }%
    { \Gamma(\lambda+\tfrac\beta2l +1)^2 } \label{Gamma product}
\end{align}
for any $\lambda\in\C$ with $\Re \lambda \geq 0$.  This result can be easily obtained from the basic representation of $Z_n(z)$
used in this paper:  When $|z|=1$, \eqref{Phi*} shows that $z\Phi_{k}(z)/\Phi^*_{k}(z)$ is unimodular.  Next we note that by
\eqref{Z from Phi} and \eqref{SzegoRec},
\begin{align*}
|Z_n(z)|            = |\Phi^*_{n-1}(z)| \, \biggl| 1 - e^{i\eta} \frac{z\Phi_{n-1}(z)}{\Phi^*_{n-1}(z)} \biggr|
\ \text{ and }\ %
|\Phi^*_{k+1}(z)|   &= |\Phi^*_{k}(z)|  \, \biggl| 1 - \alpha_k  \frac{z\Phi_{k}  (z)}{\Phi^*_{k}  (z)} \biggr|,
\end{align*}
respectively. Combining this with
Remark~\ref{R:rot}, the statistical independence of the parameters, and Lemma~\ref{L:blah} shows
\begin{align*}
\E\bigl\{ |Z_n(z)|^{2\lambda}  \bigr\} &= \E\biggl\{ |1-e^{i\eta}|^{2\lambda} \; \prod_{k=0}^{n-2} |1-\alpha_k|^{2\lambda} \biggr\}
=\prod_{l=0}^{n-1} \frac{ \Gamma(2\lambda +\tfrac\beta2 l + 1)\,\Gamma(\tfrac\beta2 l +1) }%
    { \Gamma(\lambda+\tfrac\beta2 l +1)^2 }.
\end{align*}
Note that $l$ represents $k+1$, while $l=0$ captures the average over $\eta$.

Much of the argument just presented could already be seen in the arguments leading to Lemma~\ref{L:A prod asymp}.
An essentially equivalent proof of \eqref{Gamma product} can be found in \cite{BHNY}, albeit, restricted to the case $\beta=2$.
For completeness, we now state the special case of Theorem~\ref{T:main} that follows by setting $R=2r$ and all parameters equal:

\begin{corollary}\label{C:1 point}
For $x\in\R$ and non-negative $r\in\Z$,
\begin{align*}
\lim_{n\to\infty} n^{-2r^2/\beta} \E^\beta_n\bigl\{ |Z_n(e^{ix})|^{2r} \bigr\}
    = C
    = \prod_{p=1}^{r} \frac{\Gamma\bigl(\tfrac2{\beta}p\bigr)}{\Gamma\bigl(\tfrac2{\beta}(r+p)\bigr)} .
\end{align*}
\end{corollary}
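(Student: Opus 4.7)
The plan is to derive the corollary as the special case $R = 2r$, $\vec{w} = \vec{y} = \vec{0}$ of Theorem~\ref{T:main}, after first using rotation invariance to reduce to $x = 0$. The substitution $\theta_j \mapsto \theta_j + x$ preserves both the Lebesgue measure $d\theta_j/2\pi$ and $|\Delta(e^{i\theta_1},\ldots,e^{i\theta_n})|^\beta$, hence the joint law \eqref{CGbeta}. Under this substitution the integrand $|Z_n(e^{ix})|^{2r} = \prod_j |1 - e^{i(\theta_j - x)}|^{2r}$ becomes $\prod_j |1 - e^{i\theta_j}|^{2r} = |Z_n(1)|^{2r}$, so $\E^\beta_n\{|Z_n(e^{ix})|^{2r}\}$ does not depend on $x\in\R$ and I may take $x = 0$.

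Now I would apply Theorem~\ref{T:main} with $R = 2r$, $\vec{w} = \vec{0} \in \C^r$, $\vec{y} = \vec{0} \in \C^r$. Then $\vec{x} = \vec{0}$, so the potential $V(\vec{x}) = 0$ and the initial-value problem \eqref{E:ODE again} reduces to $\frac{d\Psi}{dt} = \frac{2}{\beta t}\Delta\Psi$ together with $t^{-2r^2/\beta}\Psi(t) = \one_R^r + O(t)$. By Proposition~\ref{P:all about calA}(iv), $\Delta\, \one_R^r = r(R-r)\,\one_R^r = r^2\,\one_R^r$, so $\Psi(t) = t^{2r^2/\beta}\,\one_R^r$ solves this ODE and matches the prescribed small-$t$ asymptotics exactly; the uniqueness clause of Lemma~\ref{L:ODE} identifies this as \emph{the} solution. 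Hence $\Psi(1) = \one_R^r$ and $\langle \one_R^r,\Psi(1)\rangle = \|\one_R^r\|^2$, making the quotient in \eqref{E:Answer} equal to $1$. The exponential prefactor there is also $1$ since $\vec{w} = \vec{y} = \vec{0}$. With $R - r = r$, the normalization $2r(R-r)/\beta$ becomes $2r^2/\beta$, and \eqref{E:Cdefn} collapses to $C = \prod_{p=1}^{r} \Gamma(\tfrac{2}{\beta}p)/\Gamma(\tfrac{2}{\beta}(r+p))$, as claimed.

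There is no real obstacle: all the analytic work has been done in Theorem~\ref{T:main} and Proposition~\ref{P:all about calA}, and the specialization requires only the trivial observation that $\one_R^r$ is an eigenvector of $\Delta$. An alternative route bypasses the ODE entirely: start from the explicit single-point formula \eqref{Gamma product} with $\lambda = r$, peel off the $l = 0$ factor (which equals $\binom{2r}{r}$), and feed the remaining product into Lemma~\ref{L:prod asymp} with $R = 2r$ to read off the same constant $C$ directly.
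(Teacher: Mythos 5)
Your argument is correct and matches the paper's approach: Corollary~\ref{C:1 point} is presented there as the special case $R=2r$ of Theorem~\ref{T:main} with all parameters equal, and your extra reduction to $\vec{x}=\vec{0}$ via rotation invariance is harmless since $V$ already vanishes on $\ell^2(\Grr)$ (with $R=2r$) whenever all $x_q$ coincide, because each $\vi$ there has equal numbers of $1$'s and $2$'s. Your alternative route through \eqref{Gamma product} and Lemma~\ref{L:prod asymp} is a valid cross-check, one the paper itself alludes to just before stating the corollary.
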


%%%%%%%%%%%%%%%%%%%%%%%%%%%%%%%%%%%%%%%%%%%%%%%%%%%
%
%
%                                 Bibliography
%
%
%%%%%%%%%%%%%%%%%%%%%%%%%%%%%%%%%%%%%%%%%%%%%%%%%%%

%%%%%%%%%%%%%%%%%%%%%%%%%%%%%%%%%%%%%%%%%%%%%%%%%%%%%%%%%%%%%%%%%%%%%%%%%%%%%%%%%%%%%%%%%

\end{document}